\newtheorem{theorem}{Theorem}[section]
\newtheorem{lemma}[theorem]{Lemma}
\numberwithin{equation}{section}
\title{Hydrodynamics of the Binary Contact Path Process}
\author{Xiaofeng Xue \thanks{\textbf{E-mail}: xfxue@bjtu.edu.cn \textbf{Address}: School of Science, Beijing Jiaotong University, Beijing 100044, China.} and Linjie Zhao \thanks{\textbf{E-mail}: zhaolinjie@pku.edu.cn \textbf{Address}: School of Mathematical Sciences, Peking University, Beijing 100049, China.}}
\date{December 2018}
\begin{document}

\maketitle

\noindent {\bf Abstract:} In this paper we are concerned with the binary contact path process introduced in \cite{Gri1983} on the lattice $\mathbb{Z}^d$ with $d\geq 3$. Our main result gives a hydrodynamic limit of the process, which is the solution to a heat equation. The proof of our result follows the strategy introduced in \cite{kipnis+landim99} to give hydrodynamic limit of the SEP model with some details modified since the states of all vertices are not uniformly bounded for the binary contact path process.  In the modifications, the theory of the linear system introduced in \cite{Lig1985} is utilized.

\quad

\noindent {\bf Keywords:} binary contact path process, hydrodynamic limit, linear system, absolute continuity.

\section{Introduction}

In this paper we are concerned with the the binary contact path process on the lattice $\mathbb{Z}^d$ with $d\geq 3$. The binary contact path process $\{\eta_t\}_{t\geq 0}$ is a continuous-time Markov process with state space $[0,+\infty)^{\mathbb{Z}^d}$, i.e., at each vertex of $\mathbb{Z}^d$ there is a spin taking value in $[0,+\infty)$. To give the transition rates function of the process, we introduce some notations. For $x,y\in \mathbb{Z}^d$, we write $x\sim y$ when and only when $x$ and $y$ are neighbors, i.e., the $l_1$ norm of $x-y$ is $1$. We denote by $O$ the origin of $\mathbb{Z}^d$, i.e.,
\[
O=(0,0,\ldots,0).
\]
For any configuration $\eta\in [0,+\infty)^{\mathbb{Z}^d}$ and $x\in \mathbb{Z}^d$, we define $\eta^x\in [0,+\infty)^{\mathbb{Z}^d}$
as
\[
\eta^x(u)=
\begin{cases}
\eta(u) & \text{~if~}u\neq x,\\
0 & \text{~if~}u=x.
\end{cases}
\]
For any $\eta\in [0,+\infty)^{\mathbb{Z}^d}$ and $x,y\in \mathbb{Z}^d$ such that $x\sim y$, we define $\eta^{x,y}\in [0,+\infty)^{\mathbb{Z}^d}$ as
\[
\eta^{x,y}(u)=
\begin{cases}
\eta(u) & \text{~if~}u\neq x,\\
\eta(x)+\eta(y) & \text{~if~}u=x.
\end{cases}
\]
For each $x\in \mathbb{Z}^d$, we let $\{Y_x(t)\}_{t\geq 0}$ be a Poisson process with rate $1$. For any $x\sim y$, we let $\{U_{x,y}(t)\}_{t\geq 0}$ be a Poisson process with rate $\lambda>0$, where $\lambda$ is a constant called the infection rate. We assume that all these Poisson processes are independent. Note that we care about the order of $x$ and $y$, hence $U_{x,y}\neq U_{y,x}$. Then the binary contact path process evolves as follows. For any event moment $t$ of $Y_x(\cdot)$, $\eta_t=\eta_{t-}^x$. Note that $\eta_{t-}=\lim_{s<t,s\rightarrow t}\eta_s$, which is state of the process at the moment just before $t$. For any event moment $r$ of $U_{x,y}(\cdot)$, $\eta_r=\eta_{r-}^{x,y}$. For $0\leq t_1<t_2$, if there is no event moments of $Y_x(\cdot)$ or $\{U_{x,y}(\cdot):~y\sim x\}$ in $[t_1,t_2]$, then
\[
\eta_s(x)=\eta_{t_1}(x)\exp{\{(1-2\lambda d)(s-t_1)\}}
\]
 for any $t_1\leq s\leq t_2$, i.e.,
 \[
 \frac{d}{ds}\eta_s(x)=(1-2\lambda d)\eta_s(x)
 \]
 for $s\in [t_1,t_2]$.

 Intuitively, the binary contact path process describes the spread of an infection disease on $\mathbb{Z}^d$. $x$ is healthy if $\eta(x)=0$. $x$ is infected if $\eta(x)>0$ while $\eta(x)$ is the seriousness of the disease on $x$. $x$ is infected by a given neighbor $y$ at rate $\lambda$. When the infection occurs, the seriousness of the disease on $x$ is added  with that of $y$. When there is no infection occurs for $x$ during $[t_1,t_2]$, then $\eta(x)$ evolves according to the deterministic ODE $\frac{d}{dt}\eta_t(x)=(1-2\lambda d)\eta_t(x)$.

 The binary contact path process can be defined equivalently via its generator. According to the evolution of this process introduced above, the generator $\mathcal{L}$ of $\{\eta_t\}_{t\geq 0}$ is given by
 \begin{equation}\label{equ 1.1 generator}
     \mathcal{L}f(\eta)=\sum_{x\in \mathbb{Z}^d}\big[f(\eta^x)-f(\eta)\big]+\lambda\sum_{x\in \mathbb{Z}^d}\sum_{y\sim x}\big[f(\eta^{x,y})-f(\eta)\big]+(1-2\lambda d)\sum_{x\in \mathbb{Z}^d}f_x^{\prime}(\eta)\eta(x)
 \end{equation}
 for any $\eta\in [0,+\infty)^{\mathbb{Z}^d}$ and sufficiently smooth $f$, where $f_x^{\prime}(\eta)$ is the
partial derivative of $f$ with respect to the coordinate $\eta(x)$.

The binary contact path process is first introduced in \cite{Gri1983} as  an auxiliary model to study the critical value of the contact process, according to fact that the process $\{\xi_t\}_{t\geq 0}$ with state space $\{0,1\}^{\mathbb{Z}^d}$ defined as
\[
\xi_t(x)=
\begin{cases}
1 &\text{~if~}\eta_t(x)>0,\\
0 &\text{~if~}\eta_t(x)=0
\end{cases}
\]
for each $x\in \mathbb{Z}^d$ is a version of the basic contact process introduced in \cite{Har1974}. Utilizing the coupling relationship between $\{\eta_t\}_{t\geq 0}$ and $\{\xi_t\}_{t\geq 0}$ given above, it is proved in \cite{Gri1983} that the critical value of the contact process on $\mathbb{Z}^d$ with $d\geq 3$ is at most
\[
\frac{1}{2d(2\gamma_d-1)},
\]
 where $\gamma_d$ is the probability that the simple random walk on $\mathbb{Z}^d$ starting at $O$ never return to $O$ again.

The binary contact path process belongs to a class of continuous-time Markov processes called linear systems. For a detailed survey of the definition and main properties of linear systems, see Chapter 9 of \cite{Lig1985}.

In this paper we are concerned with the hydrodynamic limit of $\{\eta_t\}_{t\geq 0}$.  The history of hydrodynamics in probability theory goes back to the 1980s in \cite{Dobrushin&Siegmund-Schultze82,GalvesKipnisMarchioroPresutti81,Rost81}. The theory says that the microscopic density field of the concerned model, after properly space time scaling, is dominated macroscopically  be some PDE. We refer to \cite{DeMasiPresutti91,kipnis+landim99} for a comprehensive reading of the subject. The favorite model in this area, such as exclusion processes (cf. Chapter 4 in \cite{kipnis+landim99}) and zero range processes (cf. Chapter 5 in \cite{kipnis+landim99}), is mass conserved. However, the mass conserved property is absent in our model. The reason why we can consider the hydrodynamics of the binary contact path process lies in the fact that the average of mass is conserved, the same as the voter model considered in  \cite{PresuttiSpohn83}. Closely related to our work is  \cite{NagahataYoshida09}, where a central limit theorem for the density of particles in the context of binary context path processes was proved.

To give our main result, we define  $\{\eta^N_t\}_{t\geq 0}$ as the continuous-time Markov process with generator $N^2\mathcal{L}$ for each integer $N\geq 1$, where $\mathcal{L}$ is defined as in Equation \eqref{equ 1.1 generator}. That is to say, $\{\eta_t^N\}_{t\geq 0}$ is a version of $\{\eta_{tN^2}\}_{t\geq 0}$ with some initial condition $\eta^N_0\in [0,+\infty)^{\mathbb{Z}^d}$. For any $x\in \mathbb{R}^d$, we use $\delta_x(du)$ to denote the Dirac measure concentrated on $x$, i.e.,
\[
\delta_x(A)=
\begin{cases}
1 &\text{~if~}A\ni x,\\
0 & \text{~if~}A\not\ni x
\end{cases}
\]
for any Borel-measurable $A\subseteq \mathbb{R}^d$.  For any $t>0$, we use $\pi^N_t$ to denote the random empirical measure
\[
\pi^N_t := \frac{1}{N^d}\sum_{x\in \mathbb{Z}^d}\eta_t^N(x)\delta_{\frac{x}{N}}(du).
\]

We introduce the following notations. By $C_c (\mathbb{R}^d)$ denote the set of  continuous functions $f : \mathbb{R}^d \rightarrow \mathbb{R}$ with compact support. Let $\mathcal{M}_+ (\mathbb{R}^d)$ be the set of positive Radon measures in $\mathbb{R}^d$. For measures $\mu_n$, $n \geq 1$, and $\mu$ on $\mathcal{M}_+ (\mathbb{R}^d)$, say $\mu_n \rightarrow \mu$ in probability as $n \rightarrow \infty$ if for every test function $G \in C_c (\mathbb{R}^d)$, $<\mu_n,G> \rightarrow <\mu,G>$ in probability.

\begin{theorem} \label{Theorem 1.1 Main}
Let $\rho_0: \mathbb{R}^d \rightarrow [0,\infty)$ be bounded and integrable. Initially, $\eta^N_0 (x) = \rho_0 (x / N)$. Suppose $d \geq 3$ and
\[
\lambda>\frac{1}{2d(2\gamma_d-1)},
\]
then for all $t \geq 0$, as $N \rightarrow \infty$,
\begin{equation}\label{}
  \pi^N_t (d u) \rightarrow \rho (t, u) du~~\text{in probability,}
\end{equation}
where $\rho (t,u)$ is the unique solution of the heat equation
\begin{equation}\label{eq:intro1}
  \begin{cases}
    \partial_t \rho (t,u) = \lambda \Delta \rho (t,u), \\
    \rho (0,u) = \rho_0 (u).
  \end{cases}
\end{equation}
\end{theorem}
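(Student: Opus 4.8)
The plan is to prove, for each fixed $t$, the convergence $\langle \pi^N_t, G\rangle \to \int_{\mathbb{R}^d} G(u)\rho(t,u)\,du$ in $L^2$ (hence in probability) for every $G \in C_c(\mathbb{R}^d)$; since the initial profile $\eta^N_0(x)=\rho_0(x/N)$ is deterministic, this reduces the theorem to two facts, namely that $\mathbb{E}\langle\pi^N_t,G\rangle$ converges to the desired integral and that $\mathrm{Var}\langle\pi^N_t,G\rangle\to0$. I would treat the mean first. A direct computation with the generator \eqref{equ 1.1 generator} on the coordinate function $\eta\mapsto\eta(z)$ gives
\[
  \mathcal{L}\eta(z)=-\eta(z)+\lambda\sum_{y\sim z}\eta(y)+(1-2\lambda d)\eta(z)=\lambda\sum_{y\sim z}\bigl(\eta(y)-\eta(z)\bigr)=:\lambda\Delta\eta(z),
\]
the cancellation $-1+(1-2\lambda d)=-2\lambda d$ being exactly what collapses the death, infection and drift contributions into the discrete Laplacian. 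Hence $u^N_t(x):=\mathbb{E}\,\eta^N_t(x)$ solves the rescaled discrete heat equation $\partial_t u^N_t=N^2\lambda\Delta u^N_t$ with $u^N_0(x)=\rho_0(x/N)$, so that $u^N_t$ is the initial profile transported by the semigroup of the random walk with generator $N^2\lambda\Delta$. Because $N^2\lambda\Delta$ converges, on smooth functions of $x/N$, to $\lambda\Delta_{\mathbb{R}^d}$, the invariance principle together with a local limit estimate and the boundedness and integrability of $\rho_0$ yields $\frac1{N^d}\sum_x G(x/N)u^N_t(x)\to\int G(u)\rho(t,u)\,du$ with $\rho$ the heat-kernel solution of \eqref{eq:intro1}. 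This part is the classical hydrodynamics of a single random walk and I expect it to be routine.

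The substantial point is the variance, and here the linear-system structure is essential. The key fact is that the products $\eta(x)\eta(y)$ again form a closed family under $\mathcal{L}$: writing $\psi_t(x,y)=\mathbb{E}\,\eta_t(x)\eta_t(y)$, a computation of $\mathcal{L}(\eta(x)\eta(y))$ shows that for $x\neq y$ the off-diagonal values evolve as two independent walks of generator $\lambda\Delta$, coupled through the nearest-neighbour terms to the diagonal $\phi_t(x)=\mathbb{E}\,\eta_t(x)^2$, which obeys its own equation. Subtracting $u_t(x)u_t(y)$, which solves the pure two-walk equation, I find that the covariance $C_t(x,y)=\psi_t(x,y)-u_t(x)u_t(y)$ satisfies, for $x\neq y$, the same two-walk equation $\tfrac{d}{dt}C_t=\lambda(\Delta_x+\Delta_y)C_t$ (with the diagonal values $C_t(x,x)=\mathrm{Var}(\eta_t(x))$ entering through the nearest-neighbour sums), while on the diagonal $\mathrm{Var}(\eta_t(x))$ picks up a strictly positive source proportional to $u_t(x)^2$. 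Thus all correlation is produced on the diagonal and transported outward. Diagonalising the spatially homogeneous version with the ansatz $\phi_t=e^{\theta t}a$, $\psi_t(z)=e^{\theta t}b(z)$, $b(0)=a$, and $b$ harmonic off the origin, reduces at $\theta=0$ to the lattice Green's function $b(z)=a\,G(z,0)/G(0,0)$; using $G(0,0)=1/\gamma_d$ and $\sum_{|e|=1}G(e,0)=2d\bigl(G(0,0)-1\bigr)$, the diagonal consistency relation becomes $1+2\lambda d-4\lambda d\gamma_d=0$, i.e.\ $\lambda=\tfrac{1}{2d(2\gamma_d-1)}$. Therefore the hypothesis $\lambda>\tfrac{1}{2d(2\gamma_d-1)}$ (meaningful precisely because $d\geq3$ forces $\gamma_d>\tfrac12$) is exactly the condition under which the principal eigenvalue $\theta^\ast$ of the homogeneous second-moment operator is negative, so that the second moments remain bounded uniformly in $t$ and $N$, and the stationary covariance, being harmonic off the origin, decays like the Green's function.

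Granting the two bounds (i) $\sup_{N,t}\mathrm{Var}(\eta^N_t(x))\leq K$ and (ii) $|C^N_t(x,y)|\leq K|x-y|^{-(d-2)}$, which I would read off from the stationary second-moment profile of the linear system (Chapter~9 of \cite{Lig1985}), recalling that the $N^2$ time change sends $t$ to $N^2t\to\infty$ so that these stationary estimates apply, I split
\[
  \mathrm{Var}\langle\pi^N_t,G\rangle=\frac1{N^{2d}}\sum_x G(x/N)^2\,\mathrm{Var}(\eta^N_t(x))+\frac1{N^{2d}}\sum_{x\neq y}G(x/N)G(y/N)\,C^N_t(x,y).
\]
The diagonal sum is $O(N^{-d})$ by (i). For the off-diagonal sum, the compact support of $G$ confines $x,y$ to a box of side $O(N)$, so (ii) gives $\sum_{y:\,|x-y|\leq CN}|x-y|^{-(d-2)}=O(N^2)$ and the whole term is $O(N^{2-d})$, which tends to $0$ exactly because $d\geq3$. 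Hence $\mathrm{Var}\langle\pi^N_t,G\rangle\to0$; combined with the mean computation, Chebyshev's inequality yields the $L^2$, and therefore in-probability, convergence of $\langle\pi^N_t,G\rangle$, and uniqueness of the bounded solution of \eqref{eq:intro1} identifies the limit as $\rho(t,u)\,du$. The main obstacle is establishing (i)--(ii): since the coordinates are unbounded these cannot be had from an a priori pointwise bound as for the exclusion process, and must instead be extracted from the moment dynamics of the linear system, whose stability is guaranteed precisely by the threshold $\lambda>\tfrac{1}{2d(2\gamma_d-1)}$ in dimension $d\geq3$.
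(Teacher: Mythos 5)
Your proposal is correct in substance but organizes the proof quite differently from the paper. For the overall architecture, you prove fixed-time $L^2$ convergence of $\langle\pi^N_t,G\rangle$ directly (exact mean via the random-walk semigroup plus a local CLT, then variance $\to 0$), whereas the paper runs the full Kipnis--Landim machinery: tightness of the laws $Q^N$ on $D([0,T],\mathcal{M}_+(\mathbb{R}^d))$ via Aldous' criterion and the Dynkin martingales \eqref{eq:MT1}--\eqref{eq:MT2}, identification of limit points as weak solutions of \eqref{eq:intro1}, and uniqueness. Since the theorem as stated is a fixed-$t$ claim and the mean is exactly solvable (the paper's Lemma \ref{lemma 2.2} is precisely your computation $\mathcal{L}\eta(z)=\lambda\Delta\eta(z)$), your route is legitimate and arguably leaner; the paper's route buys process-level convergence and avoids invoking a local limit theorem, identifying the limit instead through the weak formulation. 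On the part that is genuinely hard --- the variance --- the two arguments coincide: your bounds (i) and (ii) are exactly what the paper's Lemma \ref{lemma 2.4} together with Equation \eqref{equ 2.3} deliver, namely $0\le\mathrm{Cov}(\eta^N_t(x),\eta^N_t(y))\le \|\rho_0\|_\infty^2\,k(y-x)/h_\lambda$ with $k$ the return probability (so $k(y-x)\sim |x-y|^{-(d-2)}$), and your identification of the threshold via $G(0,0)=1/\gamma_d$ and the diagonal consistency relation $1+2\lambda d-4\lambda d\gamma_d=0$ matches the paper's $h_\lambda=\frac{2\lambda d(2\gamma_d-1)-1}{1+2d\lambda}$ exactly. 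The one place where your sketch is thinner than the paper: you justify (i)--(ii) by appealing to the \emph{stationary} second-moment profile ``since $N^2t\to\infty$,'' but what the proof needs is a bound uniform over all $t\ge 0$, ruling out a transient overshoot; the paper obtains this for every $t$ at once by exhibiting the explicit harmonic supersolution $\Lambda=k+h_\lambda$ with $\Psi\Lambda=0$ and using positivity of $e^{t\Psi}$, so that $J_t(x)\le\Lambda(x)/h_\lambda$ for all $t$. With that step made rigorous (it is standard linear-systems material from Chapter 9 of \cite{Lig1985}, as you note), your argument closes; your final splitting into diagonal ($O(N^{-d})$) and off-diagonal ($O(N^{2-d})$) terms is a quantitative sharpening of the paper's softer split by $\|y-x\|_\infty\lessgtr N_1(\epsilon)$, and both use $d\ge 3$ in the same essential way.
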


\proof[Remark 1] According to classic theory of heat equation, $\rho (t,u)$ has the following explicit form
\begin{equation}\label{eqn:intro2}
    \rho (t,u) = \int_{\mathbb{R}^d} (2 \pi t)^{- \frac{d}{2}} \exp \left\{ - \sum_{i=1}^{d} \frac{x_i^2}{2t}  \right\} \rho_0 (\sqrt{2 \lambda} x + u) d x.
\end{equation}

\qed

\proof[Remark 2]

The assumption $\lambda>\frac{1}{2d(2\gamma_d-1)}$ is needed according to some detailed techniques in our current proof. We have no idea whether  Theorem \ref{Theorem 1.1 Main} holds for smaller $\lambda$. We will work on this problem as a further study.

\qed

The paper is organized as follows. In Section \ref{section two} we prove an useful estimate for the convergence of the variance of empirical measure at a given macroscopic time $t$. Theorem \ref{Theorem 1.1 Main} is proved in Section \ref{section proof}. The main difficulty is to prove the absolute continuity of the limiting distribution, where the estimate proved in  Section \ref{section two} is used.

\section{Convergence of the Variance}\label{section two}
In this section we will prove the following lemma.
\begin{lemma}\label{lemma 2.1 variance}
Under the initial condition given in Theorem \ref{Theorem 1.1 Main}, for given $G\in C_c^2(\mathbb{R}^d)$ and $t>0$,
\[
\lim_{N\rightarrow+\infty}{\rm Var}\big(<\pi_t^N,G>\big)=0.
\]
\end{lemma}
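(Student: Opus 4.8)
The plan is to express the variance through the pair correlations and to control them by the second--moment equations that the linear--system structure makes available. Writing $\langle\pi^N_t,G\rangle=N^{-d}\sum_x\eta^N_t(x)G(x/N)$ and setting $c^N_t(x,y):=\mathrm{Cov}(\eta^N_t(x),\eta^N_t(y))$, we have
\[
\mathrm{Var}\big(\langle\pi^N_t,G\rangle\big)=\frac{1}{N^{2d}}\sum_{x,y\in\mathbb{Z}^d}G(x/N)G(y/N)\,c^N_t(x,y).
\]
First I would apply the generator $N^2\mathcal{L}$ of \eqref{equ 1.1 generator} to the linear observable $\eta\mapsto\eta(x)$ and to the quadratic one $\eta\mapsto\eta(x)\eta(y)$. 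A direct computation gives that $m^N_t(x):=\mathbb{E}[\eta^N_t(x)]$ solves the discrete heat equation $\partial_t m^N_t=N^2\lambda\Delta m^N_t$, with $\Delta f(x)=\sum_{y\sim x}(f(y)-f(x))$, whereas $h^N_t(x,y):=\mathbb{E}[\eta^N_t(x)\eta^N_t(y)]$ solves a closed linear system: off the diagonal $\partial_t h=N^2\lambda(\Delta_x+\Delta_y)h$, and on the diagonal the same equation picks up the extra term $N^2\mathcal{V}h$, where $\mathcal{V}\psi(x,x)=\psi(x,x)+\lambda\sum_{y\sim x}\psi(y,y)$ and $\mathcal{V}\psi\equiv 0$ off the diagonal.

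Since $m^N_t\otimes m^N_t$ solves $\partial_t(m\otimes m)=N^2\lambda(\Delta_x+\Delta_y)(m\otimes m)$ with no diagonal correction, subtracting shows that the covariance satisfies
\[
\partial_t c^N_t=N^2\mathcal{A}c^N_t+N^2F^N_t,\qquad \mathcal{A}=\lambda(\Delta_x+\Delta_y)+\mathcal{V},
\]
where the source $F^N_t$ is carried by the diagonal, $F^N_t(x,x)=m^N_t(x)^2+\lambda\sum_{y\sim x}m^N_t(y)^2$. As the initial profile is deterministic, $c^N_0\equiv 0$, so Duhamel's formula yields $c^N_t=\int_0^t e^{(t-s)N^2\mathcal{A}}N^2F^N_s\,ds$. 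Both $\Delta_x+\Delta_y$ and $\mathcal{V}$ are symmetric for the counting measure on $\mathbb{Z}^d\times\mathbb{Z}^d$, so I would transfer the semigroup onto the test function and, using that $F^N_s$ lives on the diagonal, reduce the variance to
\[
\mathrm{Var}\big(\langle\pi^N_t,G\rangle\big)=\frac{1}{N^{2d}}\int_0^t N^2\sum_{x\in\mathbb{Z}^d}\big[e^{(t-s)N^2\mathcal{A}}(G\otimes G)\big](x,x)\,F^N_s(x,x)\,ds.
\]

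The heart of the matter is the uniform bound $\sup_N\sup_x[e^{rN^2\mathcal{A}}(G\otimes G)](x,x)\le C\|G\|_\infty^2$. By the Feynman--Kac representation for linear systems (Chapter IX of \cite{Lig1985}), $e^{rN^2\mathcal{A}}$ is the semigroup of two independent random walks run at speed $N^2$, weighted by a functional that accumulates only at times when the two walks coincide, that is, only on the diagonal. Running the walks at speed $N^2$ for time $r$ is the same as running them for the long time $rN^2$, so the bound is equivalent to the finiteness, as elapsed time tends to infinity, of the accumulated diagonal weight; this is precisely the classical second--moment estimate $\sup_{t\ge 0}\mathbb{E}[\eta_t(x)^2]<\infty$ for the binary contact path process. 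For $d\ge 3$ the difference of the two walks is transient, and the hypothesis $\lambda>\frac{1}{2d(2\gamma_d-1)}$ is exactly the subcriticality condition $(1+2\lambda d)G_{2\lambda}(O,O)<1$, with $G_{2\lambda}(O,O)=\frac{1}{4\lambda d\,\gamma_d}$ the Green function at the origin of the rate--$2\lambda$ difference walk, which keeps that weight bounded. This is the step I expect to be the main obstacle, and the one in which $d\ge 3$ and large $\lambda$ are genuinely used; it is the substitute, for unbounded spins, for the trivial $L^\infty$ bounds available in the exclusion process.

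Granting this bound, the proof closes by power counting. Since $\rho_0$ is bounded and integrable, $m^N_s(x)\le\|\rho_0\|_\infty$ and, by conservation of the mean, $\sum_x m^N_s(x)=\sum_x\rho_0(x/N)=O(N^d)$, so that $\sum_x F^N_s(x,x)=(1+2\lambda d)\sum_x m^N_s(x)^2\le(1+2\lambda d)\|\rho_0\|_\infty\sum_x m^N_s(x)=O(N^d)$ uniformly in $s\in[0,t]$. Inserting the uniform bound and this mass estimate gives
\[
\mathrm{Var}\big(\langle\pi^N_t,G\rangle\big)\le\frac{C\|G\|_\infty^2}{N^{2d}}\int_0^t N^2\,O(N^d)\,ds=O\!\big(N^{2-d}\big)\xrightarrow[N\to\infty]{}0\qquad(d\ge 3).
\]
Thus $d\ge 3$ enters twice: through transience, to secure the uniform second--moment bound, and through the negative exponent $2-d$ in the final count.
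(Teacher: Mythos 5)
Your proof is correct, but it is organized quite differently from the paper's. The paper starts from the same covariance decomposition, but then invokes the pointwise comparison $e^{tM_\lambda}\bigl((x,y),(u,v)\bigr)\geq p_t(x,u)p_t(y,v)$ (its Equation \eqref{equ 2.3}, proved by comparing $M_\lambda$ with the $Q$-matrix of two independent walks), bounds $\rho_0$ by $\|\rho_0\|_\infty$, and sums over $(u,v)$ using Lemma \ref{lemma 2.4} to control the covariance by $k(y-x)/h_\lambda$; the conclusion then comes from the decay of $k$ at infinity (transience in $d\geq 3$) via an $\epsilon$-splitting of the double sum. You instead close the hierarchy of moment equations into a Duhamel formula for the covariance with a source supported on the diagonal, use the self-adjointness of $M_\lambda=\lambda(\Delta_x+\Delta_y)+\mathcal{V}$ (which does hold, and which the paper never exploits) to throw the semigroup onto $G\otimes G$, and finish by a dimension count on the diagonal. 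Your computations of $\mathcal{V}$ and of the source $F^N_s$ agree with the paper's Equations \eqref{equ A.4}--\eqref{equ A.5}, and your key uniform bound $\sup_{r,x}[e^{rN^2M_\lambda}(G\otimes G)](x,x)\leq C\|G\|_\infty^2$ is exactly Lemma \ref{lemma 2.4} evaluated on the diagonal (your Khas'minskii-type threshold $(1+2\lambda d)G_{2\lambda}(O,O)<1$ rewrites precisely as $\lambda>\frac{1}{2d(2\gamma_d-1)}$, matching the paper's harmonic-function argument with $\Lambda=k+h_\lambda$). What your route buys: an explicit rate $O(N^{2-d})$ rather than mere $o(1)$, no need for the positivity claim \eqref{equ 2.3} or the $\epsilon$-splitting, and only the diagonal value $J_t(O)$ of Lemma \ref{lemma 2.4} rather than its full off-diagonal decay. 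What it costs: you must verify the symmetry of $M_\lambda$ and justify the Duhamel formula and the interchanges of sums (harmless here by Tonelli, since all kernels are nonnegative after shifting by $4\lambda dI$ and $G$ has compact support). One cosmetic caveat: the Markov part of $M_\lambda$ is not literally two independent walks--there is an extra coupled jump $(x,x)\to(y,y)$ on the diagonal--but since the potential only sees the difference walk, which is unaffected, your threshold computation is still right.
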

With Lemma \ref{lemma 2.1 variance}, we can show that the limit of any sub-sequence of $\{\pi_t^N\}_{N\geq 1}$ is absolutely continuous with respect to the Lebesgue measure $du$. For details, see Section \ref{section proof}.

As a preparation of the proof of Lemma \ref{lemma 2.1 variance}, we introduce some notations and definitions. If $V$ is a countable set, then
\[
H:V\times V\rightarrow (0,+\infty)
\]
is called a $V\times V$ matrix. For two $V\times V$ matrices $H$ and $H_1$, $HH_1$ is defined in the same way as the product of two finite-dimensional matrices is defined. That is to say,
\[
HH_1(x,y)=\sum_{u\in V}H(x,u)H_1(u,y)
\]
conditioned on the sum is absolutely convergent for each pair of $(x,y)\in V\times V$. Similarly, for integer $n\geq 1$ and real number $t\geq 0$, we define
\[
H^2=HH, H^{n+1}=H^nH, e^{tH}=\sum_{n=0}^{+\infty}\frac{t^nH^n}{n!}
\]
conditioned on the sums concerned are all absolutely convergent.

We use $\{S_n\}_{n\geq 0}$ to denote the discrete-time simple random walk on $\mathbb{Z}^d$ such that
\[
P\big(S_{n+1}=y\big|S_{n}=x\big)=\frac{1}{2d}
\]
for any $n\geq 0, x\in \mathbb{Z}^d$ and $y\sim x$.
We use $\{X_t\}_{t\geq 0}$ to denote the continuous-time simple random walk on $\mathbb{Z}^d$ such that
\[
P\big(X_{t+s}=y\big|X_t=x\big)=\lambda s(1+o(1)) \text{~while~} P\big(X_{t+s}=x\big|X_t=x\big)=1-2d\lambda s(1+o(1))
\]
for any $t\geq 0, x\in \mathbb{Z}^d$ and $y\sim x$ as $s\rightarrow 0$.

For each $x\in \mathbb{Z}^d$, we define
\begin{equation}\label{equ 2.1 k}
k(x)=P\big(S_n=O\text{~for some~}n\geq 0\big|S_0=x\big).
\end{equation}
For any $t\geq 0$ and $x,y\in \mathbb{Z}^d$, we define
\begin{equation*}
p_t(x,y)=P\big(X_t=y\big|X_0=x\big).
\end{equation*}
We use $\mathbb{E}$ to denote the expectation operator throughout this paper, then we have the following two lemmas, which are crucial for us to prove Lemma \ref{lemma 2.1 variance}.
\begin{lemma}\label{lemma 2.2}
For any $t\geq 0$,
\[
\mathbb{E}\big(\eta_t(x)\big)=\sum_{y\in \mathbb{Z}^d}p_{t}(x,y)\mathbb{E}\big(\eta_0(y)\big).
\]
\end{lemma}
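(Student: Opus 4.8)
The plan is to test the generator $\mathcal{L}$ against the coordinate projection $f(\eta)=\eta(x)$ and thereby reduce the first moment to a closed linear equation. First I would compute $\mathcal{L}f$ term by term. In the first sum of \eqref{equ 1.1 generator} only the summand $z=x$ contributes, giving $-\eta(x)$. In the second sum only the block $z=x$ survives, since $\eta^{z,y}(x)=\eta(x)$ whenever $z\neq x$ while $\eta^{x,y}(x)=\eta(x)+\eta(y)$, so this term equals $\lambda\sum_{y\sim x}\eta(y)$. In the third sum $f_z'(\eta)=\mathbf 1\{z=x\}$, contributing $(1-2\lambda d)\eta(x)$. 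Adding the three pieces, the $-\eta(x)$ and the $+\eta(x)$ cancel and one is left with
\[
\mathcal{L}\eta(x)=\lambda\sum_{y\sim x}\big(\eta(y)-\eta(x)\big).
\]
This exact cancellation is the point of the growth rate $1-2\lambda d$: the right-hand side is precisely the action of the generator of the continuous-time simple random walk $\{X_t\}$ on the function $x\mapsto\eta(x)$.

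Next I would set $\phi_t(x):=\mathbb{E}\big(\eta_t(x)\big)$ and apply Dynkin's formula to conclude
\[
\frac{d}{dt}\phi_t(x)=\lambda\sum_{y\sim x}\big(\phi_t(y)-\phi_t(x)\big),\qquad \phi_0(x)=\mathbb{E}\big(\eta_0(x)\big).
\]
On the other hand, writing $\psi_t(x):=\sum_{y}p_t(x,y)\,\phi_0(y)$ and differentiating in $t$ through the backward equation $\frac{d}{dt}p_t(x,y)=\lambda\sum_{z\sim x}\big(p_t(z,y)-p_t(x,y)\big)$ shows that $\psi_t$ solves the very same infinite system of linear ODEs, with the same initial datum $\psi_0=\phi_0$ since $p_0(x,y)=\mathbf 1\{x=y\}$. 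A uniqueness argument for this linear system—using the a priori growth bound on the moments to control the tails when summing over $y$—then forces $\phi_t\equiv\psi_t$, which is exactly the claimed identity $\mathbb{E}\big(\eta_t(x)\big)=\sum_{y}p_t(x,y)\mathbb{E}\big(\eta_0(y)\big)$.

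I expect the main obstacle to be purely the justification of the analytic manipulations on the unbounded state space $[0,+\infty)^{\mathbb{Z}^d}$: the coordinate function $\eta\mapsto\eta(x)$ is unbounded, so one must first guarantee that $\mathbb{E}\big(\eta_t(x)\big)<\infty$ for all $t$, that differentiation may be interchanged with the expectation, and that Dynkin's formula is legitimate in this setting. This is where I would lean on the theory of linear systems from Chapter 9 of \cite{Lig1985}, which supplies the finiteness of first moments and the well-posedness of their evolution whenever the initial configuration has finite, suitably controlled expectations—a hypothesis that holds in our application since $\eta_0(x)=\rho_0(x/N)$ is bounded. Granting these moment estimates, the remaining steps are the routine generator computation and the uniqueness argument sketched above.
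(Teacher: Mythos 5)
Your proposal is correct and follows essentially the same route as the paper: compute $\mathcal{L}$ on the coordinate function $\eta\mapsto\eta(x)$ to obtain the closed system $\frac{d}{dt}\mathbb{E}(\eta_t(x))=-2d\lambda\,\mathbb{E}(\eta_t(x))+\lambda\sum_{y\sim x}\mathbb{E}(\eta_t(y))$, recognize the generator of the rate-$\lambda$ simple random walk, and identify the solution with $\sum_y p_t(x,y)\mathbb{E}(\eta_0(y))$, justifying the differentiation under the expectation via Theorem 9.1.27 of Liggett's linear-systems theory. The paper phrases the last step as $F_t=e^{tH_2}F_0$ using linear ODE theory on Banach spaces rather than your explicit uniqueness comparison, but this is the same argument.
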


\begin{lemma}\label{lemma 2.3}
There exists a $(\mathbb{Z}^d\times \mathbb{Z}^d)\times(\mathbb{Z}^d\times \mathbb{Z}^d)$ matrix $M$ such that $\{e^{tM}\}_{t\geq 0}$ are well defined and
\[
\mathbb{E}\Big(\eta_t(x)\eta_t(y)\Big)=\sum_{u\in \mathbb{Z}^d}\sum_{v\in \mathbb{Z}^d}e^{tM}\Big((x,y),(u,v)\Big)\mathbb{E}\Big[\eta_0(u)\eta_0(v)\Big]
\]
for any $x,y\in \mathbb{Z}^d$, $t\geq 0$.
\end{lemma}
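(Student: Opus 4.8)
The plan is to repeat the computation behind Lemma~\ref{lemma 2.2}, but now for the product observable $g(\eta)=\eta(x)\eta(y)$ instead of the linear observable $\eta(x)$. Setting $h_t(x,y):=\mathbb{E}\big(\eta_t(x)\eta_t(y)\big)$, I would first apply the generator \eqref{equ 1.1 generator} to $g$ and verify that $\mathcal{L}g$ is again a locally finite linear combination of products $\eta(u)\eta(v)$; the coefficients of that combination are exactly the entries $M\big((x,y),(u,v)\big)$. Formally this produces the closed linear system $\frac{d}{dt}h_t=Mh_t$, and the substance of the lemma is (i) that $M$ is a bona fide matrix whose exponential converges, and (ii) that the true second moment is indeed $e^{tM}$ applied to the initial second moment.

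For the computation in (i) I would split into the off-diagonal and diagonal cases. For $x\neq y$ only the death terms at $x,y$ and the infection terms at $z\in\{x,y\}$ survive, giving
\[
\mathcal{L}g(\eta)=-4\lambda d\,\eta(x)\eta(y)+\lambda\sum_{w\sim x}\eta(w)\eta(y)+\lambda\sum_{w\sim y}\eta(x)\eta(w),
\]
so that the pair $(x,y)$ evolves like two independent continuous-time rate-$\lambda$ walks; for $x=y$ the quadratic infection term contributes an extra diagonal piece,
\[
\mathcal{L}g(\eta)=(1-4\lambda d)\,\eta(x)^2+2\lambda\sum_{w\sim x}\eta(x)\eta(w)+\lambda\sum_{w\sim x}\eta(w)^2.
\]
Reading off coefficients defines $M$: off the diagonal $M\big((x,y),(x,y)\big)=-4\lambda d$ and $M\big((x,y),(w,y)\big)=M\big((x,y),(x,w)\big)=\lambda$ for $w\sim x$, $w\sim y$; on the diagonal $M\big((x,x),(x,x)\big)=1-4\lambda d$, $M\big((x,x),(x,w)\big)=2\lambda$ and $M\big((x,x),(w,w)\big)=\lambda$ for $w\sim x$. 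Every row of $M$ has only finitely many nonzero entries.

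To see that $e^{tM}$ is well defined I would use the maximal absolute row sum $\Lambda:=\sup_{(x,y)}\sum_{(u,v)}\big|M\big((x,y),(u,v)\big)\big|$, which from the entries above is finite. Since this quantity is a submultiplicative matrix norm, $\sum_{(u,v)}|M^n((x,y),(u,v))|\le \Lambda^n$, so the series $\sum_{n\ge0}t^nM^n/n!$ converges absolutely, entry-wise and in this norm, with $\|e^{tM}\|\le e^{t\Lambda}$; in particular $\{e^{tM}\}_{t\ge0}$ is well defined, and because $M$ differs from a nonnegative matrix only by a negative multiple of the identity, $e^{tM}$ has nonnegative entries, consistent with $h_t\ge0$. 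To conclude, both $t\mapsto h_t$ and $t\mapsto e^{tM}h_0$ solve $\frac{d}{dt}u_t=Mu_t$ with initial value $h_0$ and remain of at most exponential growth in the pair $(x,y)$, so the standard uniqueness theorem for such linear equations forces $h_t=e^{tM}h_0$, which is the asserted formula.

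The main obstacle is step (ii): justifying the identity $\frac{d}{dt}\mathbb{E}\big(g(\eta_t)\big)=\mathbb{E}\big(\mathcal{L}g(\eta_t)\big)$ and the finiteness $h_t(x,y)<\infty$ for the \emph{unbounded} observable $g$, which is not literally in the domain of the Feller generator. This is precisely where the linear-system theory of \cite{Lig1985} is needed: the binary contact path process is a linear system, so its first and second moments are governed by explicit linear semigroups, and starting from the bounded data $\eta^N_0(x)=\rho_0(x/N)$ of Theorem~\ref{Theorem 1.1 Main} all second moments stay finite and grow at most exponentially in $t$. Concretely I would obtain the differential identity by a truncation argument: apply Dynkin's formula to the bounded observables $g_K(\eta)=(\eta(x)\wedge K)(\eta(y)\wedge K)$, then let $K\to\infty$, using the moment bounds from the linear-system theory to secure the uniform integrability needed to pass $\mathbb{E}(\mathcal{L}g_K(\eta_t))\to\mathbb{E}(\mathcal{L}g(\eta_t))$ to the limit. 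Combined with the coefficient computation above, this yields the closed system and hence the lemma.
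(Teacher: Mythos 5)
Your proposal follows essentially the same route as the paper: compute $\mathcal{L}$ applied to $\eta(x)\eta(y)$ in the diagonal and off-diagonal cases (your coefficients agree with the paper's $M_\lambda$, up to splitting the $2\lambda$ weight symmetrically between $(x,w)$ and $(w,x)$), read off the matrix $M$, note its uniformly bounded row sums so that $e^{tM}$ converges, and solve the closed linear ODE $\frac{d}{dt}\Gamma_t=M\Gamma_t$. The only difference is in justifying $\frac{d}{dt}\mathbb{E}[g(\eta_t)]=\mathbb{E}[\mathcal{L}g(\eta_t)]$ for the unbounded observable: where you sketch a truncation argument, the paper simply invokes Theorem 9.3.1 of Liggett (1985), which is the linear-system extension of Hille--Yosida tailored to exactly these quadratic observables.
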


Readers familiar with the theory of linear systems can easily check that Lemmas \ref{lemma 2.2} and \ref{lemma 2.3} are direct applications of Theorems 9.1.27 and 9.3.1 in \cite{Lig1985}, which are extensions of Hille-Yosida Theorem for the linear system. For readers not familar with these two theorems, we put the proofs in the appendix.

Let $M$ be defined as in Lemma \ref{lemma 2.3}, then we write $M$ as $M_\lambda$ when we need to point out the infection rate $\lambda$. The following lemma is crucial for us to prove Lemma \ref{lemma 2.1 variance}.
\begin{lemma}\label{lemma 2.4}
For any $\lambda>\frac{1}{2d(2\gamma_d-1)}$, there exists $h_\lambda>0$ such that
\[
\sum_{u\in \mathbb{Z}^d}\sum_{v\in \mathbb{Z}^d}e^{tM_\lambda}\big((x,y),(u,v)\big)\leq \frac{k(y-x)+h_\lambda}{h_\lambda}
\]
for any $t\geq 0$ and $x,y\in \mathbb{Z}^d$, where $k(x)$ is defined as in Equation \eqref{equ 2.1 k}.
\end{lemma}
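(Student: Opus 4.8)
The plan is to compute the matrix $M$ explicitly and then to bound the required quantity by exhibiting an explicit, \emph{time-independent} super-solution built from the hitting probability $k$. \textbf{Identifying $M$.} Since $\mathbb{E}(\eta_t(x)\eta_t(y))=\mathbb{E}((e^{t\mathcal{L}}f_{x,y})(\eta_0))$ with $f_{x,y}(\eta)=\eta(x)\eta(y)$, the entries of $M$ are read off from $\mathcal{L}(\eta(x)\eta(y))=\sum_{u,v}M((x,y),(u,v))\eta(u)\eta(v)$. Applying $\mathcal{L}$ from \eqref{equ 1.1 generator} to $\eta(x)\eta(y)$ (treating the cases $x\neq y$ and $x=y$ separately) shows that, acting on a symmetric function $F$,
\[
(MF)(x,y)=\lambda\sum_{z\sim x}(F(z,y)-F(x,y))+\lambda\sum_{z\sim y}(F(x,z)-F(x,y)) \quad (x\neq y),
\]
\[
(MF)(x,x)=(1-4\lambda d)F(x,x)+2\lambda\sum_{z\sim x}F(x,z)+\lambda\sum_{z\sim x}F(z,z).
\]
Thus off the diagonal $M$ is exactly the generator of two independent copies of the walk $X_t$, so its off-diagonal row sums vanish, whereas on the diagonal mass is created and the row sum equals $1+2\lambda d$. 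All off-diagonal entries of $M$ are nonnegative, so $e^{tM}$ is positivity preserving; and since every row sum is at most $A:=1+2\lambda d$, the function $F_t:=e^{tM}\mathbf{1}$ (with $\mathbf{1}$ the constant function $1$) satisfies the crude a priori bound $F_t\leq e^{At}\mathbf{1}<\infty$.

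\textbf{Reduction to a super-solution.} The quantity to be controlled is precisely $\sum_{u,v}e^{tM_\lambda}((x,y),(u,v))=F_t(x,y)$. I will produce a bounded function $g\geq\mathbf{1}$ with $Mg\leq 0$. Granting this, positivity of $e^{tM}$ gives $\frac{d}{dt}(e^{tM}g)=e^{tM}(Mg)\leq 0$, hence $e^{tM}g\leq g$; and $g-\mathbf{1}\geq 0$ gives $F_t=e^{tM}\mathbf{1}\leq e^{tM}g\leq g$, which is the assertion of the lemma as soon as $g$ has the stated form.

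\textbf{The ansatz and the role of the hypothesis.} I take $g(x,y)=1+k(y-x)/h_\lambda=(k(y-x)+h_\lambda)/h_\lambda$, which is bounded because $0\leq k\leq 1$. For $x\neq y$ the two-walk part of $Mg$ reduces to a multiple of the walk generator applied to $k$ at the point $w=y-x\neq O$, which vanishes because $k$ is harmonic off the origin; hence $(Mg)(x,y)=0$. On the diagonal I use $k(O)=1$ together with the return-probability identity $\frac{1}{2d}\sum_{z\sim O}k(z)=1-\gamma_d$ to obtain
\[
(Mg)(x,x)=(1+2\lambda d)+\frac{1}{h_\lambda}\left(1+2\lambda d(1-2\gamma_d)\right).
\]
The hypothesis $\lambda>\frac{1}{2d(2\gamma_d-1)}$ is exactly the statement that $1+2\lambda d(1-2\gamma_d)<0$, so choosing $h_\lambda=\frac{2\lambda d(2\gamma_d-1)-1}{1+2\lambda d}>0$ (or any smaller positive number) forces $(Mg)(x,x)\leq 0$. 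Thus $Mg\leq 0$ everywhere and the reduction above finishes the proof.

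\textbf{Main obstacle.} The crux is the honest computation of $M$ --- especially getting the diagonal mass-creation term right --- and recognizing that the return-probability identity for $k$ turns the diagonal of $Mg$ into an expression whose sign is governed precisely by $2\lambda d(2\gamma_d-1)-1$; this is where $d\geq 3$ (which guarantees $\gamma_d>\tfrac12$) and the lower bound on $\lambda$ enter. A secondary technical point is to justify the semigroup manipulations (positivity preservation, differentiating $e^{tM}g$, and the convergence of all the infinite sums) on the infinite index set, for which the boundedness of $g$ and the a priori bound $F_t\leq e^{At}\mathbf{1}$ from the first step suffice.
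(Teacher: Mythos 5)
Your proposal is correct and is essentially the paper's own argument: the paper passes to the difference coordinate $z=y-x$ and exhibits $\Lambda(z)=k(z)+h_\lambda$ as a zero-eigenvector of the reduced matrix $\Psi$, with exactly your $h_\lambda=\frac{2\lambda d(2\gamma_d-1)-1}{1+2\lambda d}$, and then uses positivity of $e^{t\Psi}$ and $\Lambda\geq h_\lambda$ to conclude --- which is the same comparison you run directly on the two-point lattice with the super-solution $g(x,y)=\Lambda(y-x)/h_\lambda$. The computations of $M$, the use of harmonicity of $k$ off the origin, and the identity $\frac{1}{2d}\sum_{z\sim O}k(z)=1-\gamma_d$ all match the paper.
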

The proof of Lemma \ref{lemma 2.4} follows from the strategy introduced in Section 9.3 of \cite{Lig1985}, which we put in the appendix.

Now we give the proof of Lemma \ref{lemma 2.1 variance}.

\proof[Proof of Lemma \ref{lemma 2.1 variance}]

By Lemmas \ref{lemma 2.2} and \ref{lemma 2.3}, for $x,y\in \mathbb{Z}^d$,
\[
\mathbb{E}\big(\eta_t^N(x)\big)=\sum_{u\in \mathbb{Z}^d}p_{tN^2}(x,u)\rho_0(\frac{u}{N})
\]
and
\[
\mathbb{E}\big(\eta_t^N(x)\eta_tN(y)\big)=\sum_{u\in \mathbb{Z}^d}\sum_{v\in \mathbb{Z}^d}e^{tN^2M_\lambda}\big((x,y),(u,v)\big)\rho_0(\frac{u}{N})\rho_0(\frac{v}{N})
\]
under the ininital condition given in Theorem \ref{Theorem 1.1 Main}. Therefore,
\begin{align}\label{equ 2.2}
&{\rm Var}(<\pi_t^N, G>)={\rm Cov}\big(\frac{1}{N^d}\sum_{x\in \mathbb{Z}^d}G(\frac{x}{N})\eta_t^N(x),\frac{1}{N^d}\sum_{x\in \mathbb{Z}^d}G(\frac{x}{N})\eta_t^N(x)\big) \notag\\
&=\frac{1}{N^{2d}}\sum_{x\in \mathbb{Z}^d}\sum_{y\in \mathbb{Z}^d}G(\frac{x}{N})G(\frac{y}{N}){\rm Cov}\big(\eta_t^N(x),\eta_t^N(y)\big) \notag\\
&=\frac{1}{N^{2d}}\sum_{x\in \mathbb{Z}^d}\sum_{y\in \mathbb{Z}^d}G(\frac{x}{N})G(\frac{y}{N})\Big(\mathbb{E}\big(\eta_t^N(x)\eta_t^N(y)\big)-\mathbb{E}\big(\eta_t^N(x)\big)\mathbb{E}\big(\eta_t^N(y)\big)\Big)\\
&=\sum_{x\in \mathbb{Z}^d}\sum_{y\in \mathbb{Z}^d}G(\frac{x}{N})G(\frac{y}{N})
\sum_{u\in \mathbb{Z}^d}\sum_{v\in \mathbb{Z}^d}\frac{\rho_0(\frac{v}{N})\rho_0(\frac{u}{N})}{N^{2d}}\Big(e^{tN^2M_\lambda}\big((x,y),(u,v)\big)-p_{tN^2}(x,u)
p_{tN^2}(y,v)\Big).
\notag
\end{align}

We claim that
\begin{equation}\label{equ 2.3}
e^{tM_\lambda}\big((x,y),(u,v)\big)-p_{t}(x,u)
p_{t}(y,v)\geq 0
\end{equation}
for any $u,v,x,y\in \mathbb{Z}^d$ and $t\geq 0$. To check Equation \eqref{equ 2.3}, we need to utilize the explicit expression of $M_\lambda$. For details, see the appendix.
We use $\|\rho_0\|_{\infty}$ to denote $\sup\{\rho_0(x):x\in \mathbb{R}^d\}$, then by Equations \eqref{equ 2.2}, \eqref{equ 2.3} and the fact that
$\sum_{u\in \mathbb{Z}^d}p_t(x,u)=\sum_{v\in \mathbb{Z}^d}p_t(y,v)=1$,
\begin{align}\label{equ 2.4}
{\rm Var}(<\pi_t^N, G>)
\leq \frac{\|\rho_0\|^2_\infty}{N^{2d}}\sum_{x\in \mathbb{Z}^d}\sum_{y\in \mathbb{Z}^d}|G(\frac{x}{N})||G(\frac{y}{N})|
\Big[-1+\sum_{u,v\in \mathbb{Z}^d}e^{tN^2M_\lambda}\big((x,y),(u,v)\big)\Big].
\end{align}
When $\lambda>\frac{1}{2d(2\gamma_d-1)}$, by Equation \eqref{equ 2.4} and Lemma \ref{lemma 2.4},
\begin{equation}\label{equ 2.5}
{\rm Var}(<\pi_t^N, G>)\leq \frac{\|\rho_0\|^2_\infty}{N^{2d}h_\lambda}\sum_{x\in \mathbb{Z}^d}\sum_{y\in \mathbb{Z}^d}|G(\frac{x}{N})||G(\frac{y}{N})|k(y-x).
\end{equation}
For each $x\in \mathbb{Z}^d$, we use $\|x\|_{\infty}$ to denote the $l_{\infty}$ norm of $x$, then
\[
\lim_{\|x\|\rightarrow+\infty}k(x)=0
\]
according to the fact that the simple random on $\mathbb{Z}^d$ with $d\geq 3$ is transient. As a result, for any $\epsilon>0$, there exists $N_1(\epsilon)>1$ such that
\[
k(x)\leq \epsilon
\]
when $\|x\|_{\infty}>N_1(\epsilon)$. For $d\geq 3$ and $x\in \mathbb{Z}^d$,
\[
|\{y:\|y-x\|_\infty\leq N_1(\epsilon)\}|\leq [3N_1(\epsilon)]^d,
\]
where $|A|$ is the cardinality of the set $A$. Therefore, by Equation \eqref{equ 2.5},
\begin{align}\label{equ 2.6}
 {\rm Var}(<\pi_t^N, G>)\leq \frac{\|\rho_0\|^2_\infty}{N^{2d}h_\lambda}\sum_{x\in \mathbb{Z}^d}\sum_{y\in \mathbb{Z}^d}|G(\frac{x}{N})||G(\frac{y}{N})|\epsilon
 +\frac{\|\rho_0\|^2_\infty\|G\|_\infty}{N^{2d}h_\lambda}\sum_{x\in \mathbb{Z}^d}|G(\frac{x}{N})|\big[3N_1(\epsilon)\big]^d,
\end{align}
where $\|G\|_\infty=\sup\{|G(x)|:x\in \mathbb{R}^d\}$. For sufficiently large $N$,
\[
\frac{1}{N^{2d}}\sum_{x\in \mathbb{Z}^d}\sum_{y\in \mathbb{Z}^d}|G(\frac{x}{N})||G(\frac{y}{N})|
\leq 2\Big(\int_{\mathbb{R}^d}|G(u)|du\Big)^2
\]
while
\[
\frac{1}{N^d}\sum_{x\in \mathbb{Z}^d}|G(\frac{x}{N})|\leq 2\int_{\mathbb{R}^d}|G(u)|du.
\]
Therefore, by Equation \eqref{equ 2.6},
\[
{\rm Var}(<\pi_t^N, G>)\leq \frac{2\|\rho_0\|^2_\infty\epsilon}{h_\lambda}\Big(\int_{\mathbb{R}^d}|G(u)|du\Big)^2
+\frac{2\|\rho_0\|^2_\infty\|G\|_\infty\big[3N_1(\epsilon)\big]^d}{N^dh_\lambda}\int_{\mathbb{R}^d}|G(u)|du
\]
for sufficiently large $N$. Note that $N_1(\epsilon)$ only depends on $\epsilon$, hence
\begin{equation}\label{equ 2.7}
 \limsup_{N\rightarrow+\infty}{\rm Var}(<\pi_t^N, G>)\leq \frac{2\|\rho_0\|^2_\infty\epsilon}{h_\lambda}\Big(\int_{\mathbb{R}^d}|G(u)|du\Big)^2.
\end{equation}
Lemma \ref{lemma 2.1 variance} follows from Equation \eqref{equ 2.7} directly, since $\epsilon$ is arbitrary.

\qed

\section{Proof of the Main Theorem}\label{section proof}

In this section we prove Theorem \ref{Theorem 1.1 Main}. Fix $T > 0$. Let $Q^N$, $N \geq 1$, be the measure on $D\left( [0,T], \mathcal{M}_+ (\mathbb{R}^d) \right)$ induced by the process $\{ \pi^N_t, 0 \leq t \leq T \}$. The proof is divided into two steps. We first show in Lemma \ref{lemma:MT1} that the sequence of measures $\{Q^N, N \geq 1\}$ is tight. We then prove in Lemma \ref{lemma:MT2} that any weak limit of the sequence $\{Q^N, N \geq 1\}$ along some subsequence concentrates on the trajectory which is the solution of the heat equation \eqref{eq:intro1}.  The main difficulty is to prove that the limiting measure concentrates on absolutely continuous trajectories, where Lemma \ref{lemma 2.1 variance} is utilized.  Theorem \ref{Theorem 1.1 Main} follows from Lemmas \ref{lemma:MT1} and \ref{lemma:MT2} directly.

We first introduce two martingales, which are useful to prove Lemma \ref{lemma:MT1} and Lemma \ref{lemma:MT2}.  For each test function $G \in C_c^2 (\mathbb{R^d})$, by Dynkin's martingale formula,
\begin{equation}\label{eq:MT1}
M^N_t (G) := <\pi^N_t, G> - <\pi^N_0, G> - \int_0^t N^2 \mathcal{L} <\pi^N_s, G> d s
\end{equation}
and
\begin{equation}\label{eq:MT2}
A^N_t (G) := \big[ M^N_t (G) \big]^2 - \int_{0}^{t} d s \Big\{ N^2 \mathcal{L} <\pi^N_s, G>^2 - 2 <\pi^N_s, G> N^2 \mathcal{L} <\pi^N_s, G> \Big\}
\end{equation}
are both martingales. By direct calculation,
\begin{equation}\label{eq:MT3}
N^2 \mathcal{L} <\pi^N_s, G> = \frac{\lambda}{N^d} \sum_{x \in \mathbb{Z}^d} \eta^N_s (x) \Delta_N G (\frac{
x}{N}),
\end{equation}
where $\Delta_N$ is the discrete Laplace, $\Delta_N G (x / N) = N^2 \sum_{|y - x| = 1} [G (y / N) - G (x / N)]$ and
\begin{equation}\label{eq:MT4}
\begin{split}
   N^2 \mathcal{L} <\pi^N_t, G>^2 &- 2 <\pi^N_t, G> N^2 \mathcal{L} <\pi^N_t, G> \\
     & = N^{2 - 2 d} \sum_{x \in \mathbb{Z}^d} \left\{  \eta_s^N (x)^2 \left[ G (\frac{x}{N})^2 + \lambda \sum_{|y - x|=1} G (\frac{y}{N})^2 \right] \right\}.
\end{split}
\end{equation}

\begin{lemma}[Tightness]\label{lemma:MT1}
The sequence $\{Q^N, N \geq 1\}$ is tight.
\end{lemma}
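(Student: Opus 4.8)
\section*{Proof proposal for Lemma \ref{lemma:MT1}}

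The plan is to follow the classical two-step strategy for tightness of measure-valued processes from \cite{kipnis+landim99}. First I would reduce tightness of $\{Q^N, N\geq 1\}$ on $D\left([0,T], \mathcal{M}_+ (\mathbb{R}^d)\right)$ to tightness of the real-valued processes $\{<\pi^N_\cdot, G>, N \geq 1\}$ on $D([0,T], \mathbb{R})$, for $G$ ranging over a countable dense subset of $C_c (\mathbb{R}^d)$; since $\mathcal{M}_+ (\mathbb{R}^d)$ with the vague topology is metrizable, this reduction is the standard one (Chapter 4 of \cite{kipnis+landim99}). It then suffices to verify Aldous' criterion for each fixed $G \in C_c^2 (\mathbb{R}^d)$: namely, (i) that for each fixed $t \in [0,T]$ the family $\{<\pi^N_t, G>\}_N$ is tight in $\mathbb{R}$, and (ii) that for every $\epsilon > 0$,
\[
\lim_{\delta \to 0} \limsup_{N \to \infty} \sup_{\tau \leq T,\, \theta \leq \delta} P\Big( \big| <\pi^N_{\tau+\theta}, G> - <\pi^N_\tau, G> \big| > \epsilon \Big) = 0,
\]
where the supremum is taken over stopping times $\tau$ bounded by $T$ and over $\theta \in [0,\delta]$.

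For part (i) I would use the first-moment identity of Lemma \ref{lemma 2.2}. Since $\mathbb{E}(\eta^N_t (x)) = \sum_u p_{tN^2}(x,u)\rho_0 (u/N)$ and $\sum_u p_{tN^2}(x,u) = 1$, one obtains $\mathbb{E}|<\pi^N_t, G>| \leq \mathbb{E} <\pi^N_t, |G|> \leq \|\rho_0\|_\infty N^{-d}\sum_x |G(x/N)|$, which is bounded uniformly in $N$. Tightness of a family of real random variables with uniformly bounded first absolute moment is then immediate from Markov's inequality.

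For part (ii) I would exploit the martingale decomposition \eqref{eq:MT1}, writing
\[
<\pi^N_{\tau+\theta}, G> - <\pi^N_\tau, G> = \int_\tau^{\tau+\theta} N^2 \mathcal{L} <\pi^N_s, G> \, ds + \big( M^N_{\tau+\theta}(G) - M^N_\tau(G) \big),
\]
and bound the two terms separately. For the integral term, formula \eqref{eq:MT3} gives $N^2 \mathcal{L} <\pi^N_s, G> = \frac{\lambda}{N^d}\sum_x \eta^N_s (x)\Delta_N G(x/N)$; as $\Delta_N G$ is uniformly bounded with compact support, Lemma \ref{lemma 2.2} yields $\mathbb{E}|N^2 \mathcal{L} <\pi^N_s, G>| \leq C$ uniformly in $s$ and $N$, so the expected absolute value of the integral is at most $C\theta \leq C\delta$, and Markov's inequality controls the probability. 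For the martingale term I would apply optional sampling to $A^N_t(G)$ together with the predictable quadratic variation read off from \eqref{eq:MT2} and \eqref{eq:MT4}, giving
\[
\mathbb{E}\big[ (M^N_{\tau+\theta}(G) - M^N_\tau(G))^2 \big] = \mathbb{E}\left[ \int_\tau^{\tau+\theta} N^{2-2d} \sum_{x} \eta^N_s (x)^2 \Big( G(x/N)^2 + \lambda \sum_{|y-x|=1} G(y/N)^2 \Big)\, ds \right].
\]
The crucial input is the uniform second-moment bound: by Lemmas \ref{lemma 2.3} and \ref{lemma 2.4} with $k(O)=1$, one has $\mathbb{E}(\eta^N_s (x)^2) \leq \|\rho_0\|_\infty^2 (1+h_\lambda)/h_\lambda$ uniformly in $x$, $s$, $N$. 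Since the lattice sum over $x$ contributes order $N^d$, the whole expression is bounded by $C\theta N^{2-d}$, which tends to $0$ precisely because $d \geq 3$; Chebyshev's inequality then closes part (ii).

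I expect the main obstacle to be exactly the control of the second moment $\mathbb{E}(\eta^N_s (x)^2)$ entering the quadratic variation. Unlike bounded-spin models such as the exclusion process, the spins here are unbounded, so this bound is not automatic, and it is here that the linear-system machinery of Lemmas \ref{lemma 2.3}--\ref{lemma 2.4}, and hence the hypothesis $\lambda > \frac{1}{2d(2\gamma_d-1)}$ ensuring $h_\lambda > 0$, becomes indispensable. Once that uniform bound is secured, the factor $N^{2-2d}$ against the $\sim N^d$ lattice sum produces the decay $N^{2-d} \to 0$ for $d \geq 3$, and everything else is routine.
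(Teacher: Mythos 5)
Your proposal is correct and follows essentially the same route as the paper: reduction to real-valued processes via test functions, Aldous' criterion, the first-moment bound from Lemma \ref{lemma 2.2} for condition (i), and the decomposition into the integral term plus the martingale term, with the quadratic variation controlled by the uniform second-moment bound $\mathbb{E}[\eta_s^N(x)^2]\leq \|\rho_0\|_\infty^2 (k(O)+h_\lambda)/h_\lambda$ from Lemmas \ref{lemma 2.3} and \ref{lemma 2.4}, yielding the $N^{2-d}$ decay. You also correctly identify the unbounded spins and the resulting need for the linear-system second-moment machinery as the one genuinely non-routine ingredient.
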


\begin{proof}
It is well known that the sequence $\{Q^N, N \geq 1\}$ is tight if and only if the sequence $\{ Q^N G^{-1}, N \geq 1\}$ is tight for each $G \in C_c^2 (\mathbb{R^d})$, where $Q^N G^{-1}$ is the measure on $D\left( [0,T], \mathbb{R} \right)$ induced by  the process $\{ <\pi^N_t, G>, 0 \leq t \leq T\}$. By Aldous criteria for tightness in $D\left( [0,T], \mathbb{R} \right)$, it suffices to check the following two conditions:\\
(i) For all $0 \leq t \leq T$,
$$
\lim_{M \rightarrow \infty} \limsup_{N \rightarrow \infty} Q^N G^{-1} (|\omega_t| > M) = 0.
$$
(ii) Denote by $\mathfrak{T}$ the set of stopping times with respect to the natural filtration bounded by $T$. For all $\delta > 0$,
$$
\lim_{\gamma \rightarrow 0} \limsup_{N \rightarrow \infty} \sup_{\tau \in \mathfrak{T}, \theta \leq \gamma} Q^N G^{-1} (|\omega_\tau - \omega_{\tau + \theta}| > \delta) = 0.
$$

To check condition (i), for all $0 \leq t \leq T$,
\begin{equation}\label{eq:MT7}
\begin{split}
   \limsup_{N \rightarrow \infty} Q^N G^{-1} (|\omega_t| > M) &= \mathbb{P} (|<\pi^N_t, G>| >  M) \\
     & \leq \frac{1}{M} \limsup_{N \rightarrow \infty} \mathbb{E} \left[ \frac{1}{N^d} \sum_{x \in \mathbb{Z}^d} \eta^N_t (x) |G (\frac{x}{N})| \right].
\end{split}
\end{equation}
Since $G$ is bounded and by Lemma \ref{lemma 2.2},
\begin{equation}\label{eq:MT10}
  \mathbb{E} \left[ \sum_x \eta^N_t (x) \right] =  \mathbb{E} \left[ \sum_x \eta^N_0 (x) \right],
\end{equation}
the right-hand side of \eqref{eq:MT7} is bounded by
\begin{equation*}
\frac{||G||_\infty}{M} \limsup_{N \rightarrow \infty} \frac{1}{N^d} \sum_x \rho_0 (x / N),
\end{equation*}
which is equal to
$$
\frac{||G||_\infty}{M} \int_{\mathbb{R}^d} \rho_0 (u) d u.
$$
Therefore, condition (i) is astisfied by the integrability of $\rho_0$.

To check condition (ii), by equations \eqref{eq:MT1} and \eqref{eq:MT3}, we deal with
$$
\int_\tau^{\tau + \theta} \frac{\lambda}{N^d} \sum_{x \in \mathbb{Z}^d} \eta^N_s (x) \Delta_N G (\frac{
x}{N}) d s
$$
and
$$
M^N_{\tau + \theta} (G) - M^N_\tau (G)
$$
separately.  For the first term, by Chebyshev's inequality,
\begin{equation}\label{}
\begin{split}
  \mathbb{P} \left( \left| \int_\tau^{\tau + \theta} \frac{\lambda}{N^d} \sum_{x \in \mathbb{Z}^d} \eta^N_s (x) \Delta_N G (\frac{x}{N}) d s \right| > \delta \right)  &\leq  \lambda ||\Delta G||_\infty \delta^{-1} \mathbb{E} \left[ \int_\tau^{\tau + \theta} N^{-d} \sum_{x \in \mathbb{Z}^d} \eta^N_s (x) d s \right ]\\
  & =  \lambda ||\Delta G||_\infty \delta^{-1}  \int_0^{\theta} N^{-d} \mathbb{E}  \left[ \sum_{x \in \mathbb{Z}^d} \eta^N_{s+\tau} (x) \right] ds\\
  & =  \lambda ||\Delta G||_\infty \theta \delta^{-1} N^{-d} \sum_{x \in \mathbb{Z}^d} \rho_0 (x / N).
\end{split}
\end{equation}
The last equality is due to the fact that $\big\{ \sum_{x \in \mathbb{Z}^d} \eta^N_t (x) \big\}_{t \geq 0}$ is a martingale, which is implied by equation \eqref{eq:MT10}, and that $\tau$ is a bounded stopping time. For the second term,
\begin{equation}\label{}
  \mathbb{P} (|M^N_{\tau + \theta} (G) - M^N_\tau (G)| > \delta) \leq \delta^{-2} \mathbb{E} [ (M^N_{\tau + \theta} (G) - M^N_\tau (G))^2 ].
\end{equation}
By equations \eqref{eq:MT2} and \eqref{eq:MT4}, the last term is bounded by
\begin{equation}\label{eq:MT5}
 \delta^{-2} N^{2 - 2 d} \int_0^T \sum_{x \in \mathbb{Z}^d} \mathbb{E} [ \eta^N_s (x) ^2] \left[G (\frac{x}{N})^2 + \lambda \sum_{|y - x|=1} G (\frac{y}{N})^2 \right] d s.
\end{equation}
By the boundedness of the initial density profile $\rho_0$, Lemma \ref{lemma 2.3} and Lemma \ref{lemma 2.4},
\begin{equation}\label{eq:MT11}
\sup_{N,s,x} \mathbb{E} [ \eta^N_s (x) ^2] \leq ||\rho_0||_\infty^2 \frac{k (O) + h_\lambda}{h_\lambda} < \infty.
\end{equation}
Note that $G$ has compact support, hence there are at most $C N^d$ nonzero summation  terms in \eqref{eq:MT5} for some finite $C > 0$. By the boundedness of $G$ and \eqref{eq:MT11}, formula \eqref{eq:MT5} is bounded by $C T \delta^{-2} N^{2 - d}$ for some finite constant $C > 0$, and the result follows.
\end{proof}

\begin{lemma}[Uniqueness of the limit point]\label{lemma:MT2}
  If $Q$ is any weak limit of the sequence $\{Q^N, N \geq 1\}$ along some subsequence, then the measure $Q$ concentrates on the trajectory  which is the solution of the heat equation \eqref{eq:intro1}.
\end{lemma}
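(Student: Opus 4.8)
The plan is to follow the Kipnis--Landim scheme in three stages: use the martingale $M^N_t(G)$ to show that every subsequential limit $Q$ is supported on weak solutions of the heat equation; use the variance estimate of Lemma~\ref{lemma 2.1 variance} to upgrade this to an absolutely continuous trajectory; and close by uniqueness of the heat equation. As flagged in the text, the delicate point is the absolute continuity, which is forced by the variance bound together with the first-moment computation of Lemma~\ref{lemma 2.2}.

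First I would show the martingale is negligible. By \eqref{eq:MT2}, \eqref{eq:MT4} and the uniform second-moment bound \eqref{eq:MT11}, the compact support of $G$ leaves only $O(N^d)$ nonzero terms, so
\[
\mathbb{E}\big[(M^N_t(G))^2\big] \le C\,T\,N^{2-d} \xrightarrow[N\to\infty]{} 0
\]
for $d\ge 3$. Rewriting \eqref{eq:MT1} via the explicit drift \eqref{eq:MT3}, this shows that
\[
\langle \pi^N_t, G\rangle - \langle \pi^N_0, G\rangle - \lambda\int_0^t \langle \pi^N_s, \Delta_N G\rangle\, ds \longrightarrow 0
\]
in $L^2$. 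Since $\Delta_N G\to \Delta G$ uniformly for $G\in C^2_c(\mathbb{R}^d)$ and $\langle\pi^N_0,G\rangle\to\int \rho_0 G\,du$, passing to the limit along the chosen subsequence gives that $Q$ is concentrated on trajectories $\pi_\cdot$ satisfying
\[
\langle \pi_t, G\rangle = \langle \pi_0, G\rangle + \lambda\int_0^t \langle \pi_s, \Delta G\rangle\, ds
\]
for all $G\in C^2_c(\mathbb{R}^d)$, with $\pi_0 = \rho_0\,du$. Here one must verify that the relevant functionals are continuous $Q$-almost surely; since the set of fixed discontinuity times of a c\`adl\`ag trajectory is countable, this holds outside a countable set of times, which suffices.

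The crux is absolute continuity. By Lemma~\ref{lemma 2.2}, $\mathbb{E}[\langle\pi^N_t,G\rangle] = N^{-d}\sum_{x,u} G(x/N)\,p_{tN^2}(x,u)\,\rho_0(u/N)$; because the generator of $\{X_t\}$ is $\lambda$ times the discrete Laplacian, the diffusively rescaled walk converges to Brownian motion with diffusivity $\lambda$, and a local central limit theorem together with the integrability of $\rho_0$ gives $\mathbb{E}[\langle\pi^N_t,G\rangle]\to\int \rho(t,u)G(u)\,du$, with $\rho$ the heat-kernel solution \eqref{eqn:intro2}. Combined with $\mathrm{Var}(\langle\pi^N_t,G\rangle)\to 0$ from Lemma~\ref{lemma 2.1 variance}, this yields $\langle\pi^N_t,G\rangle\to\int\rho(t,u)G(u)\,du$ in $L^2$, hence in probability. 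Therefore, for each fixed $t$ that is a.s. a continuity point, the $Q$-law of $\langle\pi_t,G\rangle$ is the Dirac mass at $\int\rho(t,u)G\,du$, so $Q$-a.s. $\langle\pi_t,G\rangle=\int\rho(t,u)G\,du$. Running this over a countable dense family of test functions and times, and extending by right-continuity of the trajectories and continuity of $t\mapsto\rho(t,\cdot)$, I would conclude that $Q$-a.s. $\pi_t(du)=\rho(t,u)\,du$ for all $t$; in particular $\pi_t$ is absolutely continuous. Since $\rho(t,u)\,du$ is then an absolutely continuous weak solution of the heat equation, the standard duality argument---testing the difference of two solutions against $G(s,u)=(e^{\lambda(t-s)\Delta}\phi)(u)$, which solves the backward heat equation---gives uniqueness and identifies the limit as \eqref{eqn:intro2}.

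The main obstacle I anticipate is precisely this absolute continuity step. Because the spins $\eta^N_t(x)$ are unbounded, the empirical measures are controlled only in the vague topology, and one must rule out both loss of mass to infinity and the appearance of a singular part in the limit. The variance bound of Lemma~\ref{lemma 2.1 variance} is exactly the tool that forces the limit to be deterministic, and matching it against the first-moment heat-kernel computation is what exhibits the limit as the absolutely continuous density $\rho(t,\cdot)$; without the uniform boundedness available in the exclusion setting, this replacement of the usual energy estimate is the key modification.
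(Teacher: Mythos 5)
Your proposal is correct and follows the same overall Kipnis--Landim skeleton (martingale vanishes in $L^2$, variance estimate of Lemma~\ref{lemma 2.1 variance} forces a deterministic limit, absolute continuity is the delicate point), but the identification of the limit is done by a genuinely different route. The paper never computes the limit of $\mathbb{E}[\langle\pi^N_t,G\rangle]$ explicitly: it only uses the soft bound $\mathbb{E}[\langle\pi^N_t,G\rangle]\leq \|\rho_0\|_\infty\int |G(u)|\,du$ (via Lemma~\ref{lemma 2.2}) to get absolute continuity of the \emph{mean} measure $\mathbb{E}[\pi_t(du)]$, then takes expectations in the almost-sure identity \eqref{eq:MT8} to show the Radon--Nikodym derivative is a weak solution of \eqref{eq:intro1}, invokes uniqueness of weak solutions, and finally gets determinism of $\pi_t$ from Fatou plus Lemma~\ref{lemma 2.1 variance}. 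You instead identify $\lim_N\mathbb{E}[\langle\pi^N_t,G\rangle]=\int\rho(t,u)G(u)\,du$ directly from the random-walk representation of Lemma~\ref{lemma 2.2} via a (local) CLT for the diffusively rescaled walk, and then the variance bound gives $\langle\pi^N_t,G\rangle\to\int\rho(t,u)G(u)\,du$ in probability, which delivers determinism and absolute continuity in one stroke; the weak formulation \eqref{eq:MT8} is then needed only to guarantee continuity of the limiting trajectories (so the time-$t$ projections are $Q$-a.s.\ continuous) and the PDE uniqueness step becomes essentially redundant. Your route buys an explicit, self-contained probabilistic identification of the density at the cost of a local limit theorem input (and the usual Riemann-sum care for a merely bounded, integrable $\rho_0$ --- a point the paper also glosses over); the paper's route avoids any refined random-walk asymptotics by leaning on the uniqueness theory for the heat equation. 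Both arguments are sound.
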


We first recall the definition of the weak solution to the heat equation. Fix an initial bounded density profile $\rho_0: \mathbb{R}^d \rightarrow \mathbb{R}_+$. A bounded function $\rho: \mathbb{R}_+ \times \mathbb{R}^d \rightarrow \mathbb{R}$ is a weak solution of the heat equation \eqref{eq:intro1} if
\begin{equation}
    \int_{\mathbb{R}^d} \rho (t,u) G (u) du = \int_{\mathbb{R}^d} \rho_0 (u) G (u) d u + \int _ { 0 } ^ { t } d s \int_{\mathbb{R}^d} \rho (s,u) G (u) du
\end{equation}
for all $t \geq 0$ and for all $G \in C_c (\mathbb{R}^d)$. It is well known that the weak solution to the heat equation is unique, and therefore is given by  \eqref{eqn:intro2}.

\begin{proof}[Proof of Lemma \ref{lemma:MT2}]
Assume $\{ \pi_t^{N_k}, 0 \leq t \leq T \}$ converges in distribution to $\{ \pi_t, 0 \leq t \leq T \}$, whose law is $Q$. The lemma is a direct consequence of the following two statements: (i)  For each $0 \leq t \leq T$,  $\mathbb{E} [\pi_t (d u)] = \rho (t,u) d u$, where $\rho (t,u)$ is the solution of the heat equation \eqref{eq:intro1}.  (ii) $Q (\pi_t = \mathbb{E} [\pi_t]~~\text{for all $0 \leq t \leq T$}) = 1$.

For the first statement, we first show that for all $G \in C_c^2 (\mathbb{R}^d)$, with probability one, for all $0 \leq t \leq T$,
\begin{equation}\label{eq:MT8}
<\pi_t, G>  = <\pi_0, G> + \lambda \int_{0}^{t} <\pi_s, \Delta G> d s.
\end{equation}
Note that the mapping from the trajectory $\{ \pi_t, 0 \leq t \leq T\}$ to
$$
\sup_{0 \leq t \leq T} |<\pi_t, G> - <\pi_0, G> - \lambda \int_{0}^{t} <\pi_s, \Delta G> d s|
$$ is continuous. Therefore, for any $\epsilon > 0$,
\begin{equation}\label{eq:MT6}
\begin{split}
  &Q \left( \sup_{0 \leq t \leq T} |<\pi_t, G> - <\pi_0, G> - \lambda \int_{0}^{t} <\pi_s, \Delta G> d s| > \epsilon \right) \\
  &\leq \liminf_{k \rightarrow \infty} Q^{N_k} \left( \sup_{0 \leq t \leq T} |<\pi_t, G> - <\pi_0, G> - \lambda \int_{0}^{t} <\pi_s, \Delta G> d s|  > \epsilon \right )
\end{split}
\end{equation}
since the event estimated above is open. By equation \eqref{eq:MT1}, the right-hand side of the above inequality is equal to
\begin{equation}
\liminf_{k \rightarrow \infty} \mathbb{P}  \big( \sup_{0 \leq t \leq T} |M_t^{N_k} (G) + \lambda \int_{0}^{t} <\pi_s^{N_k}, \Delta_{N_k} G - \Delta G> d s|  > \epsilon \big),
\end{equation}
which is bounded above by
\begin{equation}\label{eq:MT9}
   \liminf_{k \rightarrow \infty} \mathbb{P}  \big( \sup_{0 \leq t \leq T} |M_t^{N_k} (G) |  > \epsilon/2 \big)
  \leq 4 \epsilon^{-2} \liminf_{k \rightarrow \infty}  \mathbb{E}  [M_T^{N_k} (G)^2]
\end{equation}
since there exists a constant $C > 0$, such that for all $0 \leq t \leq T$ and large enough $N$,
\begin{equation}
 \mathbb{E} \big[ |<\pi_s^N, \Delta G> - <\pi_s^N, \Delta_N G> | \big] \leq \frac{C}{N}.
\end{equation}
By the proof in Lemma \ref{lemma:MT1}, $\mathbb{E}  [M_T^{N} (G)^2]$ is bounded above by $C N^{2 - d}$ for some finite positive constant $C$, and it follows that the right-hand side of  \eqref{eq:MT9} is zero. Therefore,
\begin{equation}
  Q \left( \sup_{0 \leq t \leq T} |<\pi_t, G> - <\pi_0, G> - \lambda \int_{0}^{t} <\pi_s, \Delta G> d s| > \epsilon \right) = 0
\end{equation}
for all $\epsilon > 0$, from which \eqref{eq:MT8} follows directly.

Next we show that $\mathbb{E} [\pi_t(du)]$ is absolutely continuous. Note that for any $G \in C_c^2 (\mathbb{R^d})$ and for any $t \in [0,T]$, the coordinate mapping from $\{ \omega_t, 0 \leq t \leq T \}$ to $\omega_t$ is almost surely continuous under $Q G^{-1}$ by formula \eqref{eq:MT8}. Therefore, as $k \rightarrow \infty$,
\begin{equation}
<\pi_t^{N_k}, G> \rightarrow <\pi_t,G> ~~\text{in distribution}.
\end{equation}
By Lemma \ref{lemma 2.1 variance} and the estimates for the right-hand side of \eqref{eq:MT7},
\begin{equation}
\mathbb{E} [ <\pi_t^{N}, G>^2 ] = \rm{Var} ( <\pi_t^{N}, G> ) +  \mathbb{E} [ <\pi_t^{N}, G> ]^2 < \infty
\end{equation}
uniformly in $N$, which implies the uniform integrability of the sequence $\mathbb{E}  [ <\pi_t^{N_k}, G> ]$. Thus,
\begin{equation}\label{}
   \mathbb{E}  [ <\pi_t^{N_k}, G> ] \rightarrow  \mathbb{E} [ <\pi_t,G> ].
\end{equation}
By Lemma \ref{lemma 2.2} and the boundedness of the initial density profile $\rho_0$, $\sup_{t,x,N} \mathbb{E} [\eta^N_t (x)] \leq || \rho_0 ||_\infty$. Therefore,
\begin{equation}
\begin{split}
\mathbb{E} [ <\pi_t,G> ] &= \lim_{k \rightarrow \infty} \mathbb{E}  [ <\pi_t^{N_k}, G> ] \\
&\leq || \rho_0 ||_\infty \lim_{k \rightarrow \infty} \frac{1}{N^d} \sum_{x \in \mathbb{Z}^d} G (\frac{x}{N}) = || \rho_0 ||_\infty \int G (u) d u,
\end{split}
\end{equation}
which implies the absolutely continuous of $\mathbb{E} [\pi_t(du)]$.

Denote by $\rho (t,u)$ the Radon-Nikodym derivative of the measure $\mathbb{E} [\pi_t (d u)]$. Note that
\begin{equation}\label{}
  <\pi_0,G> = \lim_{N \rightarrow \infty}  <\pi_0^N,G> = \sum_{x} G (\frac{x}{N}) \rho_0 (\frac{x}{N}) = \int \rho_0 (u) G (u) du.
\end{equation}
Taking expectation in both hands of equation \eqref{eq:MT8},
\begin{equation}
    \int_{\mathbb{R}^d} \rho (t,u) G (u) du = \int_{\mathbb{R}^d} \rho_0 (u) G (u) d u + \int _ { 0 } ^ { t } d s \int_{\mathbb{R}^d} \rho (s,u) G (u) du,
\end{equation}
which shows that $\rho (t,u)$ is the weak solution of the heat equation \eqref{eq:intro1}, and hence the unique solution.

For the second statement, by Fatou's Lemma,
\begin{equation}
\mathbb{E} [ <\pi_t,G>^2 ] \leq \liminf_{N \rightarrow \infty} \mathbb{E}  [ <\pi_t^N, G>^2 ].
\end{equation}
Therefore, for all $G \in C_c^2 (\mathbb{R^d})$,
\begin{equation}\label{}
\begin{split}
 \rm{Var} ( <\pi_t,G> ) & = \mathbb{E} [ <\pi_t,G>^2 ] - (\mathbb{E} [ <\pi_t,G> ])^2\\
 & \leq \liminf_{k \rightarrow \infty} \{ \mathbb{E}  [ <\pi_t^{N_k}, G>^2 ] - (\mathbb{E}  [ <\pi_t^{N_k}, G> ])^2 \}\\
 & = \liminf_{k \rightarrow \infty} \rm{Var}( <\pi_t^{N_k}, G> ) = 0
\end{split}
\end{equation}
according to Lemma \ref{lemma 2.1 variance}.
As a result, for all $0 \leq t \leq T$, $Q (\pi_t = \mathbb{E} [ \pi_t] ) = 1$. Since the trajectories are right continuous and have left limits, $Q (\pi_t = \mathbb{E}[ \pi_t], \forall 0 \leq t \leq T) = 1$.

\end{proof}

\begin{proof}[Proof of Theorem \ref{Theorem 1.1 Main}]
Let $d_0$ be the metric on $D ([0,T], \mathcal{M}_+ (\mathbb{R}^d))$ which induces  the Skorohod topology and makes the space complete and separable. Suppose $Q^N$ does not converge weakly to the measure $\{ \rho (t,u) du, 0 \leq t \leq T\} =: Q$. Then there exist $\epsilon > 0$ and some subsequence $Q^{N_k}$ such that $d_0 (Q^{N_k}, Q) > \epsilon$ for all $k$.  By Lemma \ref{lemma:MT1}, the sequence  $Q^{N_k}$ has a subsequence $Q^{N_{k_l}}$ such that $Q^{N_{k_l}}$ converges, and denote the limit by $\hat{Q}$. By lemma \ref{lemma:MT2}, $Q = \hat{Q}$, which is a contradiction. Therefore,  $Q^N$ converges weakly to the measure $\{ \rho (t,u) du, 0 \leq t \leq T\}$.

Since the measure $\rho (t,u) du$ is vaguely continuous in $t$, $\pi_t^N$ converges in distribution to the deterministic measure $\rho (t,u) du$.  Since the limiting measure is deterministic, the convergence is in probability.
\end{proof}

\textbf{Acknowledgments.} The authors are grateful to the financial support from the National Natural Science Foundation of China with
grant numbers 11501542 and 11531001.

\appendix{}
\section{Some properties of the binary contact path process}
In this appendix we give the proofs of Lemmas \ref{lemma 2.2}, \ref{lemma 2.3}, \ref{lemma 2.4} and Equation \eqref{equ 2.3}. Our proofs are based on the theory of the linear system introduced in Chapter 9 of \cite{Lig1985}.

\proof[Proof of Lemma \ref{lemma 2.2}]

Let $\{S(t)\}_{t\geq 0}$ be the semi-group of $\{\eta_t\}_{t\geq 0}$, then Theorem 9.1.27 of \cite{Lig1985} shows that
\[
\frac{d}{dt}S(t)f(\eta)=S(t)\mathcal{L}f(\eta)
\]
holds for $f$ with the form $f(\eta)=\eta(x)$, which can be considered as an extension of Hille-Yosida Theorem for the linear system.
Then, for each $x\in \mathbb{Z}^d$,
\begin{align}\label{equ A.1}
\frac{d}{dt}\mathbb{E}\big(\eta_t(x)\big)&=\mathbb{E}\big(0-\eta_t(x)\big)+\lambda\sum_{y\sim x}\mathbb{E}\big(\eta_t(x)+\eta_t(y)-\eta_t(x)\big)+(1-2\lambda d)
\mathbb{E}\big(\eta_t(x)\big) \notag\\
&=-2d\lambda \mathbb{E}\big(\eta_t(x)\big)+\lambda\sum_{y\sim x}\mathbb{E}\big(\eta_t(y)\big).
\end{align}
For any $t\geq 0$, we define $F_t:\mathbb{Z}^d\rightarrow \mathbb{R}$ as
\[
F_t(x)=\mathbb{E}\big(\eta_t(x)\big)
\]
for each $x\in Z^d$. For a countable set $V$,  $V\times V$ matrix $H$ and $K:V\rightarrow\mathbb{R}$, we define $HK:V\rightarrow\mathbb{R}$ as
\[
HK(x)=\sum_{y\in V}H(x,y)K(y)
\]
for each $x\in V$ conditioned on the sum is absolutely convergent. By Equation \eqref{equ A.1},
\[
\frac{d}{dt}F_t=H_2F_t
\]
for any $t\geq 0$, where
\[
H_2(x,y)=
\begin{cases}
-2d\lambda & \text{~if~}y=x,\\
\lambda & \text{~if~}y\sim x,\\
0 & \text{~else},
\end{cases}
\]
i.e., $H_2$ is the Q-matrix of the simple random walk $\{X_t\}_{t\geq 0}$ introduced in Section \ref{section two}. According to the theory of the linear ordinary differential equation on a Banach space introduced in \cite{Deimling1977},
\[
F_t=e^{tH_2}F_0
\]
for any $t\geq 0$ and hence
\begin{equation}\label{equ A.2}
\mathbb{E}\big(\eta_t(x)\big)=\sum_{y\in\mathbb{Z}^d}e^{tH_2}(x,y)\mathbb{E}\big(\eta_0(y)\big)
\end{equation}
for each $x\in \mathbb{Z}^d$. Since $H_2$ is the Q-matrix of $\{X_t\}_{t\geq 0}$,
\begin{equation}\label{equ A.3}
e^{tH_2}(x,y)=p_t(x,y)
\end{equation}
for any $x,y\in \mathbb{Z}^d$. Lemma \ref{lemma 2.2} follows from Equations \eqref{equ A.2} and \eqref{equ A.3} directly.

\qed

\proof[Proof of Lemma \ref{lemma 2.3}]

According to Theorem 9.3.1 of \cite{Lig1985}, which is also an extension of Hille-Yosida Theorem,
\[
\frac{d}{dt}S(t)f(\eta)=S(t)\mathcal{L}f(\eta)
\]
for $f$ with the form $f(\eta)=\eta(x)\eta(y)$, where $\{S(t)\}_{t\geq 0}$ is the semi-group of the bianry contact path process as we have introduced. Therefore,
\begin{align}\label{equ A.4}
\frac{d}{dt}\mathbb{E}\big(\eta_t^2(x)\big)&=\mathbb{E}\big(0-\eta_t^2(x)\big)+\lambda\sum_{y\sim x}\mathbb{E}\Big[\big(\eta_t(x)+\eta_t(y)\big)^2-\eta_t^2(x)\Big]
+2(1-2\lambda d)\mathbb{E}\big(\eta_t^2(x)\big)\notag\\
&=(1-4\lambda d)\mathbb{E}\big(\eta_t^2(x)\big)+\lambda\sum_{y\sim x}\mathbb{E}\big(\eta_t^2(y)\big)+2\lambda\sum_{y\sim x}\mathbb{E}\big(\eta_t(x)\eta_t(y)\big)
\end{align}
and
\begin{equation}\label{equ A.5}
\frac{d}{dt}\mathbb{E}\big(\eta_t(x)\eta_t(y)\big)=-4\lambda d\mathbb{E}\big(\eta_t(x)\eta_t(y)\big)+\lambda\sum_{u\sim x}\mathbb{E}\big(\eta_t(u)\eta_t(y)\big)
+\lambda\sum_{v\sim y}\mathbb{E}\big(\eta_t(x)\eta_t(v)\big)
\end{equation}
for $x\neq y$. For any $t\geq 0$, we define $\Gamma_t:\mathbb{Z}^d\times \mathbb{Z}^d\rightarrow \mathbb{R}$ as
\begin{equation*}
\Gamma_t(x,y)=\mathbb{E}\big(\eta_t(x)\eta_t(y)\big)
\end{equation*}
for any $x,y\in \mathbb{Z}^d$. Then, by Equations \eqref{equ A.4} and \eqref{equ A.5},
\[
\frac{d}{dt}\Gamma_t=M_\lambda \Gamma_t,
\]
where $M_\lambda$ is a $(\mathbb{Z}^d\times \mathbb{Z}^d)\times(\mathbb{Z}^d\times \mathbb{Z}^d)$ matrix such that
\[
M_\lambda\big((x,y),(u,v)\big)=
\begin{cases}
(1-4\lambda d) & \text{~if~}x=y=u=v,\\
\lambda & \text{~if~}x=y, u\sim x \text{~and~}v=u,\\
\lambda & \text{~if~}y=x=u, v\sim x,\\
\lambda & \text{~if~}y=x=v, u\sim x,\\
-4\lambda d & \text{~if~}y\neq x, u=x \text{~and~}v=y,\\
\lambda & \text{~if~}y\neq x, u\sim x \text{~and~}v=y,\\
\lambda & \text{~if~}y\neq x, u=x \text{~and~}v\sim y,\\
0 & \text{~else}
\end{cases}
\]
for any $x,y,u,v\in \mathbb{Z}^d$. As a result, according to the theory of the linear ordinary differential equation on a Banach space introduced in \cite{Deimling1977},
\[
\Gamma_t=e^{tM_\lambda}\Gamma_0
\]
for any $t\geq 0$, Lemma \ref{lemma 2.3} follows from which directly.

\qed

\proof[Proof of Lemma \ref{lemma 2.4}]

We use $\{\widehat{\eta}_t\}_{t\geq 0}$ to denote the binary contact path process with intial condition
\[
\widehat{\eta}_t(x)=1
\]
for all $x\in \mathbb{Z}^d$. By Lemma \ref{lemma 2.3},
\begin{equation}\label{equ A.6}
 \sum_{u\in \mathbb{Z}^d}\sum_{v\in \mathbb{Z}^d}e^{tM_\lambda}\big((x,y),(u,v)\big)=\mathbb{E}\big(\widehat{\eta}_t(x)\widehat{\eta}_t(y)\big)
\end{equation}
for any $x,y\in \mathbb{Z}^d$. For any $x\in \mathbb{Z}^d$, we define
\[
J_t (x)=\mathbb{E}\big(\widehat{\eta}_t(O)\widehat{\eta}_t(x)\big).
\]
Under the initial condition $\widehat{\eta}_0(x)=1$ for all $x\in \mathbb{Z}^d$, the binary contact path process is spatial homogeneous. Therefore,
\[
J_t(x)=\mathbb{E}\big(\widehat{\eta}_t(y)\widehat{\eta}_t(x+y)\big)
\]
for any $y\in \mathbb{Z}^d$ and
\begin{equation}\label{equ A.7}
 \sum_{u\in \mathbb{Z}^d}\sum_{v\in \mathbb{Z}^d}e^{tM_\lambda}\big((x,y),(u,v)\big)=J_t(y-x)
\end{equation}
according to Equation \eqref{equ A.6}. According to a similar analysis with that leading to Equations \eqref{equ A.4} and \eqref{equ A.5},
\[
\frac{d}{dt}J_t(x)=-4\lambda dJ_t(x)+2\lambda\sum_{y\sim x}J_t(y)
\]
for any $x\neq O$ while
\[
\frac{d}{dt}J_t(O)=(1-2\lambda d)J(O)+4\lambda dJ_t(e_1),
\]
where $e_1=(1,0,\ldots,0)$, i.e., the first elementary unit vector of $\mathbb{Z}^d$. Therefore,
\[
J_t=e^{t\Psi}J_0,
\]
where
\[
\Psi(x,y)=
\begin{cases}
-4\lambda d & \text{~if~}x\neq O \text{~and~}y=x,\\
2\lambda & \text{~if~}x\neq O \text{~and~}y\sim x,\\
(1-2\lambda d) & \text{~if~} x=y=O,\\
4\lambda d & \text{~if~}x=O \text{~and~}y=e_1,\\
0 & \text{~else}.
\end{cases}
\]
As a result, according to the fact that $J_0(y)\equiv 1$,
\begin{equation}\label{equ A.8}
J_t(x)=\sum_{y\in \mathbb{Z}^d}e^{t\Psi}(x,y)J_0(y)=\sum_{y\in \mathbb{Z}^d}e^{t\Psi}(x,y)
\end{equation}
for each $x\in \mathbb{Z}^d$. When $\lambda>\frac{1}{2d(2\gamma_d-1)}$, we define
\[
h_\lambda=\frac{2\lambda d(2\gamma_d-1)-1}{1+2d\lambda}>0.
\]
For each $x\in \mathbb{Z}^d$, let $\Lambda(x)=k(x)+h_\lambda$, then
\[
\Psi \Lambda=0 \text{\quad}(\text{zero function on $\mathbb{Z}^d$})
\]
according to the facts that $\gamma_d=1-k(e_1)$ and $k(x)=\frac{1}{2d}\sum_{y\sim x}k(y)$ for any $x\neq O$. Therefore, $\Lambda$ is the eigenvector of $\Psi$ with respect to the eigenvalue $0$. Then,
\[
e^{t\Psi}\Lambda=\Lambda+\sum_{n=1}^{+\infty}\frac{t^n\Psi^{n-1}\Psi \Lambda}{n!}=\Lambda
\]
for any $t\geq 0$, i.e., $\Lambda$ is the eigenvector of $e^{t\Psi}$ with respect to the eigenvalue $e^{t0}=1$ for any $t\geq 0$. As a result,
\begin{equation}\label{equ A.9}
\Lambda(x)=\sum_{y\in \mathbb{Z}^d}e^{t\Psi}(x,y)\Lambda(y)
\end{equation}
for any $t\geq 0$ and $x\in \mathbb{Z}^d$. Since $\Psi(x,y)\geq 0$ when $x\neq y$,
\[
e^{t\Psi}(x,y)\geq 0
\]
for any $x,y\in \mathbb{Z}^d$. Therefore, according to Equations \eqref{equ A.8}, \eqref{equ A.9} and the fact that $\inf_{x\in \mathbb{Z}^d} \Lambda(x)=h_\lambda$,
\begin{align}\label{equ A.10}
J_t(x)=\sum_{y\in \mathbb{Z}^d}e^{t\Psi}(x,y)\leq \sum_{y\in \mathbb{Z}^d}e^{t\Psi}(x,y)\frac{\Lambda(y)}{h_\lambda}=\frac{\Lambda(x)}{h_\lambda}
\end{align}
for any $x\in \mathbb{Z}^d$. Lemma \ref{lemma 2.4} follows from Equations \eqref{equ A.7} and \eqref{equ A.10} directly.

\qed

\proof[Proof of Equation \eqref{equ 2.3}]

Let $\{X_t\}_{t\geq 0}$ be the continuous-time simple random walk defined as in Section \ref{section two} and $\{Y_t\}_{t\geq 0}$ be an independent copy of $\{X_t\}_{t\geq 0}$, then
\[
p_t(x,u)p_t(y,v)=e^{tC_\lambda}\big((x,y),(u,v)\big),
\]
where $C_\lambda$ is the Q-matrix of $\{(X_t,Y_t)\}_{t\geq 0}$, i.e.,
\[
C_\lambda\big((x,y),(u,v)\big)=
\begin{cases}
-4\lambda d &\text{~if~}u=x\text{~and~}v=y,\\
\lambda & \text{~if~}u=x\text{~and~}v\sim y,\\
\lambda & \text{~if~}u\sim x\text{~and~}v=y,\\
0 & \text{~else}.
\end{cases}
\]
According to the expression of $M_\lambda$,
\begin{equation}\label{equ A.11}
M_\lambda\big((x,y),(u,v)\big)\geq C_\lambda\big((x,y),(u,v)\big)
\end{equation}
for any $x,y,u,v\in \mathbb{Z}^d$. Let $I$ be the $(\mathbb{Z}^d\times \mathbb{Z^d})\times (\mathbb{Z}^d\times \mathbb{Z}^d)$ identity matrix, then
\[
(4\lambda dI+M_\lambda)\big((x,y),(u,v)\big)\geq 0
\]
and
\[
(4\lambda dI+C_\lambda)\big((x,y),(u,v)\big)\geq 0
\]
for any $x,y,u,v\in \mathbb{Z}^d$.

Then, by Equation \eqref{equ A.11},
\[
e^{t(4\lambda dI+M_\lambda)}\big((x,y),(u,v)\big)\geq e^{t(4\lambda dI+C_\lambda)}\big((x,y),(u,v)\big)
\]
for any $x,y,u,v\in \mathbb{Z}^d$ and $t\geq 0$, Equation \eqref{lemma 2.3} follows from which by canceling $e^{4\lambda d t}$ in both sides.

\qed


\begin{thebibliography}{}
\bibitem{DeMasiPresutti91} De Masi, A. and Presutti, E. (1991). Mathematical methods for hydrodynamic limits. {\it Lecture Notes in Mathematics}.
\bibitem{Deimling1977}Deimling, K. (1977). \emph{Ordinary Differential Equations in Banach Spaces.} Springer, Berlin.
\bibitem{Dobrushin&Siegmund-Schultze82} Dobrushin Moscow, R. O. and Siegmund-Schultze, R. (1982). The hydrodynamic limit for systems of particles with independent evolution. {\it Mathematische Nachrichten}, {\bf 105}, 199-224.
\bibitem{GalvesKipnisMarchioroPresutti81} Galves, A., Kipnis, C., Marchioro, C. and Presutti, E. (1981). Nonequilibrium measures which exhibit a temperature gradient: study of a model. {\it Communications in Mathematical Physics}, {\bf 81}, 127-147.
\bibitem{Gri1983}Griffeath, D. (1983). The binary contact path process. \emph{The Annals
of Probability} \textbf{11}, 692-705.
\bibitem{Har1974}Harris, T. E. (1974). Contact interactions on a lattice. \emph{The Annals of Probability} \textbf{2}, 969-988.
\bibitem{kipnis+landim99} Kipnis, C. and Landim, C. (1999) {\it Scaling limits of interacting particle systems.} Springer-Verlag, Berlin.
\bibitem{Lig1985}Liggett, T. M. (1985). {\it Interacting Particle Systems.} Springer, New York.
\bibitem{NagahataYoshida09} Nagahata, Y. and Yoshida, N. (2009). Central limit theorem for a class of linear systems. {\it Electronic Journal of Probability}, {\bf 14}, 960-977.
\bibitem{PresuttiSpohn83} Presutti, E. and Spohn, H. (1983). Hydrodynamics of the voter model. {\it The Annals of Probability}, 867-875.
\bibitem{Rost81} Rost, H. (1981) Nonequilibrium behaviour of a many particle process: density profile and local equilibria. {\it Z. Wahrsch. Verw. Gebiete}, {\bf 58}, 41-53.
\end{thebibliography}
\end{document}